 \numberwithin{equation}{section} 
\newtheorem{thm}{Theorem}[section]
\renewcommand{\O}{\mathcal{O}}
\newtheorem{cor}[thm]{Corollary}
\newcommand{\E}{{\cal E}_q}
\newcommand{\be}{\begin{equation}}
\newcommand{\ee}{\end{equation}}
\newcommand{\ba}{\begin{array}}
\newcommand{\ea}{\end{array}}
\newcommand{\bg}{\begin{gathered}}
\newcommand{\eg}{\end{gathered}}
\renewcommand{\r}{\rho}
\renewcommand{\b}{\beta}
\renewcommand{\t}{\theta}
\renewcommand{\O}{\mathcal{O}}
\newcommand{\f}{\phi}
\renewcommand{\r}{\rho}
\newcommand{\mc}{\mathcal}
\newcommand{\cqHp }{continuous $q$-Hermite polynomials }
\newcommand{\aw}{ Askey--Wilson  }
\newcommand{\bea}{\begin{eqnarray}}
\newcommand{\eea}{\end{eqnarray}}
\newcommand{\D}{\mathcal{D}_q}
\newcommand{\Sum}{\sum_{n=0}^\infty}
\renewcommand{\(}{\left( } \renewcommand{\)}{\right) }
\begin{document}

\title{$q$-Fractional Integral Operators With Two Parameters}

\author{ Mourad E. H.  Ismail 
 \\
 \and  Keru Zhou\thanks{Corresponding author}}

\maketitle

\begin{abstract}
We use the Poisson kernel of the continuous $q$-Hermite polynomials to introduces families of integral operators, which are 
semigroups of linear operators. We describe the eigenvalues and eigenfunctions of one family of operators. The action of the  semigroups of operators on the Askey--Wilson polynomials is shown to only change the parameters but preserves the degrees, hence we produce transmutation relation for the  Askey--Wilson polynomials. The transmutation relations are then used to derive bilinear generating functions involving the Askey--Wilson polynomials. 
\end{abstract}
  
  2000 Mathematics Subject Classification: Primary: 33D45, 47D03   Secondary: 33C45,   26A33
  
  Keywords and phrases: $q$-fractional integral operators, semigroups, bilinear generating functions, Askey-Wilson polynomials, Askey-Wilson integral, semigroup, continuous $q$-Hermite polynomials, divided difference operators.\\

Filename: Ism-ZhouPaperv3.tex

 \section{Introduction}
 The Askey-Wilson operator, $\D$, \cite{And:Ask:Roy} is a divided difference operator and is a $q$-analogue 
 of the differentiation operator $d/dx$. Several versions of a right 
 inverse of the Askey-Wilson operator were described in  
 \cite{Bro:Ism} \cite{Ism:Rah} \cite{Ism:Zha1}. The idea in these 
 works is to define $\D^{-1}$ on a basis for a weighted $L_2$ 
 space then extend it by linearity.  On the other hand, several 
 works defined $q$-fractional integral operators in different ways  
 see \cite{Ismbook}.  In  
  \cite{Ism:Zha:Zho},  the authors  introduced three one parameter 
  semigroups of  operators. 
 
 The purpose of this work is to generalize the  Ismail-Rahman 
 work  \cite{Ism:Rah2} by introducing a family $q$-fractional 
 operators with two parameters, which will be denoted by 
 $\{\mathcal{K}_{a,c}:a>0\}$. These operators have the composition 
 law $\mathcal{K}_{b,cq^{-a/2}}
 \mathcal{K}_{a,c}=\mathcal{K}_{a+b,c}$ which is similar 
 to the semigroup property.  These operators also have the 
 property $\D \mathcal{K}_{a,c}=\mathcal{K}_{a-1,c}$ if $a>1$. 
 In addition, we  identify an analogue of the infinitesimal generator 
 for  this family 
 of operators. We also introduce another more general family 
 and briefly discuss its properties. We refer the interested reader 
 to the work of Annaby and Mansour \cite{Ann:Man} which surveys  
 some classical $q$-fractional integrals.

In the sequel we shall follow the standard notations 
for $q$-series in \cite{And:Ask:Roy} \cite{Gas:Rah}, and   \cite{Ismbook}. In particular we use 
\begin{equation}
 h(\cos \t; a_1,a_2,\cdots,a_n):= 
  \prod_{j=1}^{n}(a_je^{i\t},a_je^{-i\t};q)_\infty.
\end{equation}
The  \cqHp  play an important role in the development of 
$q$-fractional integrals. 
They have the generating  function  
\begin{eqnarray}
\Sum \frac{H_n(\cos \t|q)}{(q;q)_n} t ^n = \frac{1}{(te^{i\t}, te^{-i\t};q)_\infty},
\label{eqGFHn}
\end{eqnarray}
and are generated by 
\begin{eqnarray}
\label{eqqHrr}
\bg
H_0(x|q) =1, \quad H_1(x|q) = 2x\\
2xH_n(x|q) = H_{n+1}(x|q) + (1-q^n) H_{n-1}(x|q). 
\eg
\end{eqnarray}
The  normalized weight function of the  \cqHp is 
\begin{eqnarray}
w_H(x|q) = \frac{(q, e^{2i\t}, e^{-2i\t};q)_\infty}{2\pi \sqrt{1-x^2}}, 
\end{eqnarray}
and the orthogonality  relation is 
\begin{eqnarray}
\label{eqorcqH}
\int_{-1}^1 H_m(x|q)H_n(x|q) w_H(x|q)  dx = (q;q)_n \delta_{m,n}.
\end{eqnarray}
The Poisson kernel for the \cqHp is \cite[Theorem 13.1.6]{Ismbook}
\begin{eqnarray}
\label{eqPoiKer}
\Sum H_n(\cos \t|q)  H_n(\cos \f|q) \frac{t^n}{(q;q)_n} = 
\frac{(t^2;q)_\infty}{(t e^{i(\t+\f)}, t e^{i(\t-\f)}, t e^{-i(\t+\f)}, 
	t e^{i(\f-\t)};q)_\infty}. 
\end{eqnarray}
See also  \cite{And:Ask:Roy}, \cite{Koe:Swa}.

To define the Askey-Wilson operator, we write the 
variable $x$ as 
$x = (z+1/z)/2$ and with any function $f(x)$, we let  $\breve{f}(z):= f(x)$.  The \aw operator is defined by 
\bea
\label{eqDefAWOP}
(\D f)(x) = \frac{\breve{f}(q^{1/2}z) - \breve{f}(q^{-1/2}z)}{(q^{1/2}-q^{-1/2})(z-1/z)/2}.
\eea

  Section 2 contains preliminary information.  In Section 3, 
we introduce the operators $\mathcal{K}_{a,c}$ and derive  their multiplication 
properties and identify and a one sided inverse operator. This is 
followed by Section 4, where we find an analogue of an  
infinitesimal generator for this family. The latter operator is defined  
on a basis in some specific $L_2$ space associated with \cqHp.  
We also establish  some general properties  for the generalized  
infinitesimal generator. In Section 5, we evaluate the action of the 
operator $\mathcal{K}_{a,c}$ on certain basis and $q$-exponential 
functions. In Section 6 we apply our results  to construct a bilinear 
symmetric kernel involving  special  Askey--Wilson polynomials. 

Finally Section 7 extends the results of \S 6 to a three parameter 
family of operators, and slightly more general Askey--Wilson 
polynomials. 

The Askey-Wilson integral, \cite{And:Ask:Roy}, \cite{Gas:Rah}. \cite{Ismbook}, is 
	\begin{eqnarray}
	\label{eqAWI}
	\int_0^\pi  
	\frac{w_H(\cos \f|q)}
	{h(\cos \f; a_1,a_2, a_3, a_4)} \sin \f d\f = 
	\frac{(a_1a_2a_3a_4;q)_\infty}{\prod_{1 \le j < k \le 4} 
		(a_ja_k;q)_\infty},
	\end{eqnarray}
and the Jacobi triple product identity \cite{Gas:Rah}, \cite{Ismbook}
\begin{eqnarray}\label{jtp}
(q,-z,-q/z;q)_\infty=\sum_{-\infty}^{\infty}q^{\binom{n}{2}}z^n.
\end{eqnarray}
 will be used throughout this work. 

\section{Preliminaries}
Ismail and Rahman \cite{Ism:Rah2} showed that a right inverse of Askey-Wilson operator $\D^{-1}$ may be defined by
\begin{eqnarray}\label{eqdefd}
\bg
(\D^{-1}  f)(\cos \t) = q^{-\frac{1}{2}} \left(\frac{1-q}{2c}\right) h(\cos \t;-cq^{1/2},-q^{1/2}/c)\\
\int_0^\pi \frac{w_H(\cos \f |q)(q;q)_\infty f(\cos\f)sin\f d\f}{h(\cos\f;q^{1/2}e^{i\t},q^{1/2}e^{-i\t})h(\cos\f;-1/c,-cq)}.
\eg
\end{eqnarray}
The original definition in \cite{Ism:Rah2} had an additional constant term but following \cite{Ism:Zha:Zho} we shall delete it. We emphasis an important fact that the strong operator limit $\lim_{p\rightarrow q^-}\mathcal{D}_p\mathcal{D}_q^{-1}$ is still the identity operator.

It is known that Korovin's theorem,  \cite{Kor}, can be used to prove the following result.
\begin{thm}\label{Lem1}Let $p_n$ be orthonormal with respect to a 
	probability measure $\mu$ on a compact  interval $[a,b]$. If the 
	Poisson kernel  $P_r(x,y):= \Sum p_n(x)p_n(y) r^n$ is $\ge 0$ for all $x, y \in$ supp$(\mu) \subset  [a,b]$,  then 
	\begin{eqnarray}
	\lim_{r \to 1^-} \int_a^b  P_r(x,y) f(y) d\mu(y) = f(x)
	\notag
	\end{eqnarray}
	is uniform  for all continuous functions on $[a,b]$.
\end{thm}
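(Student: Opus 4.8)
The plan is to deduce Theorem~\ref{Lem1} from Korovin's theorem, whose content is that an equicontinuous (or uniform) convergence statement for a sequence of positive linear operators on $C[a,b]$ follows once convergence is verified on the three test functions $1$, $x$, and $x^2$. First I would set $T_r f := \int_a^b P_r(x,y) f(y)\,d\mu(y)$ and check that each $T_r$ is a bounded positive linear operator on $C[a,b]$: positivity is immediate from the hypothesis $P_r(x,y)\ge 0$ on $\mathrm{supp}(\mu)\times\mathrm{supp}(\mu)$, and boundedness follows once $T_r 1$ is shown to be bounded. So the first substantive step is to compute $T_r 1 = \int_a^b P_r(x,y)\,d\mu(y)$. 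Using orthonormality of the $p_n$ with respect to the probability measure $\mu$, the only surviving term in $\Sum p_n(x)p_n(y) r^n$ after integrating against $d\mu(y)$ is the $n=0$ term, and $p_0 \equiv 1$ since $\mu$ is a probability measure; hence $T_r 1 \equiv 1$ for all $r\in(0,1)$. This simultaneously gives $\|T_r\|=1$ and disposes of the first Korovin test function exactly.

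Next I would handle the test functions $x$ and $x^2$. The key observation is that $x$ and $x^2$ are polynomials of degree $1$ and $2$, hence lie in the span of $p_0, p_1$ and of $p_0, p_1, p_2$ respectively; write $x = \alpha_0 p_0(x) + \alpha_1 p_1(x)$ and $x^2 = \beta_0 p_0(x)+\beta_1 p_1(x)+\beta_2 p_2(x)$ for suitable constants. Applying $T_r$ and again using orthonormality to collapse the $y$-integral, one gets $T_r x = \alpha_0 p_0(x) + \alpha_1 r\, p_1(x)$ and $T_r x^2 = \beta_0 p_0(x) + \beta_1 r\, p_1(x) + \beta_2 r^2 p_2(x)$; in each case the coefficient of $p_k$ simply acquires a factor $r^k$. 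Letting $r\to 1^-$ these converge (uniformly on the compact set $[a,b]$, since $p_0,p_1,p_2$ are continuous and bounded there) back to $x$ and $x^2$. Thus $T_r g \to g$ uniformly for $g \in \{1,x,x^2\}$, and Korovin's theorem then yields $T_r f \to f$ uniformly on $[a,b]$ for every $f\in C[a,b]$, which is the claim.

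The main technical point to be careful about — and the step I would expect to require the most attention — is the justification of interchanging the infinite sum over $n$ with the integral over $y$ when computing $T_r 1$, $T_r x$, $T_r x^2$. For a fixed $r\in(0,1)$ the series $\Sum p_n(x)p_n(y) r^n$ need not converge uniformly in $y$ on all of $[a,b]$ a priori, so the cleanest route is to argue termwise on the test functions (each of which is a \emph{finite} linear combination of the $p_n$, so no interchange is needed at all once one expands $f$ in that basis), rather than to manipulate the kernel directly. Concretely: for $f$ a polynomial one expands $f = \sum_{k=0}^{N} c_k p_k$ and computes $T_r f = \sum_{k=0}^{N} c_k r^k p_k \to \sum_{k=0}^N c_k p_k = f$ uniformly, with no convergence issues; this gives the Korovin test functions directly. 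A secondary point is the precise hypotheses of Korovin's theorem: it requires the operators to be defined on $C[a,b]$ and positive, and it is invoked here for the interval $[a,b]$ even though $\mathrm{supp}(\mu)$ may be a proper subset — positivity only needs $P_r\ge 0$ on $\mathrm{supp}(\mu)\times\mathrm{supp}(\mu)$ because the outer integral in $x$ plays no role (the conclusion is about the function $x\mapsto T_r f(x)$), so this causes no difficulty. Assembling these pieces gives the theorem.
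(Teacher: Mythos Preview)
Your proposal is correct and follows exactly the route the paper indicates: the paper does not actually supply a proof of this theorem but only remarks that it can be proved using Korovin's theorem, and your argument fills in precisely the standard verification (positivity of the operators $T_r$, then checking $T_r g\to g$ uniformly for the test functions $g\in\{1,x,x^2\}$ via their finite expansions in $p_0,p_1,p_2$). There is nothing to compare, since the paper gives no further details.
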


\section{$q$-Fractional Integral  Operators }
Motivated by  \eqref{eqdefd}, we define the following integral operator
\begin{eqnarray}\label{eqdefTa}
\bg
(\mathcal{K}_{a,c}  f)(\cos \t) = q^{\frac{a(a-3)}{4}} \left(\frac{1-q}{2c}\right)^a h(\cos \t;-cq^{1-a/2},-q^{a/2}/c)\\
\int_0^\pi \frac{w_H(\cos \f |q)(q^a;q)_\infty f(\cos\f)sin\f d\f}{h(\cos\f;q^{a/2}e^{i\t},q^{a/2}e^{-i\t})h(\cos\f;-1/c,-cq)}.
\eg
\end{eqnarray}

\begin{thm}\label{thm1}
	The  operators  $\{\mathcal{K}_{a,c}: a >0\}$ have the following properties. 
	
	\noindent  $\textup{(a)}$   $\mathcal{K}_{b,cq^{-a/2}} \mathcal{K}_{a,c} = \mathcal{K}_{a+b,c}$.
	
	\noindent   $\textup{(b)}$ $\mathcal{K}_{a,c}$ tends to the identity operator 
	as $a \to 0^+$. 
	
	\noindent $\textup{(c)}$  The action of $\D$ on 
	$\mathcal{K}_{a,c}$ is given by  $\D \mathcal{K}_{a,c} = \mathcal{K}_{a-1,c}$. 
\end{thm}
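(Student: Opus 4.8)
The plan is to prove parts (a), (b), (c) in a natural order, with part (a) doing most of the work. For part (a), the strategy is to substitute the definition \eqref{eqdefTa} of $\mathcal{K}_{a,c}$ twice and interchange the order of the two integrations. After writing out $(\mathcal{K}_{b,cq^{-a/2}}\mathcal{K}_{a,c}f)(\cos\t)$, the inner $\f$-integral coming from $\mathcal{K}_{a,c}$ will be nested inside the $\psi$-integral coming from $\mathcal{K}_{b,cq^{-a/2}}$; after Fubini one is left with an inner integral in the intermediate variable whose integrand is a product of two Askey--Wilson-type weights, namely $w_H(\cos\f|q)$ divided by factors of the form $h(\cos\f;q^{a/2}e^{i\t},q^{a/2}e^{-i\t})$ and $h(\cos\f;q^{b/2}e^{i\psi},q^{b/2}e^{-i\psi})$ and the $c$-dependent factors. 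This inner integral is exactly of the form evaluated by the Askey--Wilson integral \eqref{eqAWI} with the four parameters being $q^{a/2}e^{\pm i\t}$ and $(cq^{-a/2})^{\pm1}q^{\text{(shift)}}$ — so \eqref{eqAWI} collapses it to an explicit infinite-product ratio. The main point will then be a bookkeeping identity: the resulting product of $q$-shifted factorials, together with the prefactors $q^{a(a-3)/4}$, $q^{b(b-3)/4}$, $((1-q)/2c)^a$, $((1-q)q^{a/2}/2c)^b$ and the two $h$-prefactors in $\t$ and the intermediate variable, must reorganize into precisely the prefactor $q^{(a+b)(a+b-3)/4}((1-q)/2c)^{a+b}h(\cos\t;-cq^{1-(a+b)/2},-q^{(a+b)/2}/c)$ times the single remaining weight that defines $\mathcal{K}_{a+b,c}f$. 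I expect the exponent arithmetic — verifying that $\tfrac{a(a-3)}{4}+\tfrac{b(b-3)}{4}+\tfrac{ab}{2}$-type terms combine with the exponents thrown off by $(q^a;q)_\infty(q^b;q)_\infty$ versus $(q^{a+b};q)_\infty$ — to be the real obstacle; this is presumably exactly why the normalizing constant was chosen in the peculiar form $q^{a(a-3)/4}$.

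For part (b), I would take $a\to0^+$ in \eqref{eqdefTa} directly. As $a\to0$, the prefactor $q^{a(a-3)/4}\to1$, $((1-q)/2c)^a\to1$, $h(\cos\t;-cq^{1-a/2},-q^{a/2}/c)\to h(\cos\t;-cq,-1/c)$, and $(q^a;q)_\infty\to0$ like $(1-q^a)\sim -a\log q$ times $(q;q)_\infty$; meanwhile the denominator factor $h(\cos\f;q^{a/2}e^{i\t},q^{a/2}e^{-i\t})$ develops a zero as $a\to0$ at $\f=\pm\t$ (since $q^0e^{i\t}e^{-i\t}=1$ makes $(1\cdot q^0;q)_\infty=0$) which, combined with the vanishing $(q^a;q)_\infty$, should produce a Dirac-type concentration at $\f=\t$. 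Cleanly, though, I would instead identify $\mathcal{K}_{a,c}$ as a Poisson-kernel integral operator: rewrite the kernel of $\mathcal{K}_{a,c}$ as $\Sum r^n p_n(\cos\t)p_n(\cos\f)$ with $r=r(a)\to1^-$ as $a\to0^+$ (the continuous $q$-Hermite Poisson kernel \eqref{eqPoiKer}, up to the $c$-dependent Askey--Wilson-type dressing), and then invoke Theorem \ref{Lem1} — positivity of the Poisson kernel for $0<r<1$ gives that $\mathcal{K}_{a,c}f\to f$ uniformly for continuous $f$. This matches the remark after \eqref{eqdefd} that $\lim_{p\to q^-}\mathcal{D}_p\mathcal{D}_q^{-1}$ is the identity, and the cleanest route is to make the substitution that puts \eqref{eqdefTa} literally in the form $\int P_r(\cos\t,\cos\f)f(\cos\f)\,w_H(\cos\f|q)\sin\f\,d\f$ with an appropriate $r$.

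For part (c), I would apply the Askey--Wilson operator $\D$ to \eqref{eqdefTa} in the variable $x=\cos\t$, i.e. to $\breve{\cdot}$ in $z$ with $x=(z+1/z)/2$. Only the $\t$-dependent factors are affected: the prefactor $h(\cos\t;-cq^{1-a/2},-q^{a/2}/c)$ and, inside the integral, the factor $1/h(\cos\f;q^{a/2}e^{i\t},q^{a/2}e^{-i\t})=1/\big((q^{a/2}ze^{i\f},q^{a/2}ze^{-i\f},q^{a/2}e^{i\f}/z,q^{a/2}e^{-i\f}/z;q)_\infty\big)$. The key is the standard product rule / explicit action of $\D$ on these $q$-shifted-factorial products; applying $\D$ should shift $q^{a/2}\mapsto q^{(a-1)/2}$ in the appropriate places and produce exactly the prefactor $q^{(a-1)(a-4)/4}((1-q)/2c)^{a-1}h(\cos\t;-cq^{1-(a-1)/2},-q^{(a-1)/2}/c)$ and the kernel of $\mathcal{K}_{a-1,c}$, at the cost of a single explicit constant that must match. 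The mechanics here are a known computation — the $\D$-image of $h(\cos\t;\alpha,\beta)$-type products is classical (see e.g. the $q$-Leibniz rule in \cite{Ismbook}) — so the obstacle is again purely the constant-chasing: confirming $q^{a(a-3)/4}$ and the $((1-q)/2c)^a$ transform into $q^{(a-1)(a-4)/4}$ and $((1-q)/2c)^{a-1}$ with no leftover factor, which, as in (a), is exactly what the normalization was rigged to achieve. In all three parts the genuine mathematical input is \eqref{eqAWI}, \eqref{eqPoiKer}, Theorem \ref{Lem1}, and the explicit $\D$-action on $h$-products; everything else is careful accounting of powers of $q$.
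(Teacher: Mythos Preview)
Your plan is correct and, for parts (b) and (c), matches the paper exactly: (b) is proved by recognising the integral kernel of $\mathcal{K}_{a,c}$ as the continuous $q$-Hermite Poisson kernel with $r=q^{a/2}$ acting on the continuous function $f(\cos\f)/h(\cos\f;-1/c,-cq)$ and invoking Theorem~\ref{Lem1}; (c) the paper simply says ``direct computation'', which is precisely the $\D$-action on the $\t$-dependent $h$-products that you outline.

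For (a) there is one small slip and one genuine (but minor) methodological difference worth noting. The slip: when you compose, the prefactor $h(\cos\psi;-cq^{1-a/2},-q^{a/2}/c)$ carried by the inner $\mathcal{K}_{a,c}$ cancels \emph{exactly} against the denominator factor $h(\cos\psi;-q^{a/2}/c,-cq^{1-a/2})$ of the outer $\mathcal{K}_{b,cq^{-a/2}}$; this cancellation is precisely what the shift $c\mapsto cq^{-a/2}$ is designed to produce. Consequently the inner integral over the intermediate variable contains no $c$-dependent parameters at all---its four Askey--Wilson parameters are $q^{b/2}e^{\pm i\t}$ and $q^{a/2}e^{\pm i\psi}$, not anything involving $c$. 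The methodological difference: you propose evaluating that inner integral by the Askey--Wilson integral \eqref{eqAWI}, whereas the paper instead expands each factor $(q^a;q)_\infty/h(\cdot;q^{a/2}e^{\pm i\t})$ and $(q^b;q)_\infty/h(\cdot;q^{b/2}e^{\pm i\psi})$ as a Poisson kernel \eqref{eqPoiKer} and uses the orthogonality \eqref{eqorcqH} of the $H_n$. Both routes yield $(q^{a+b};q)_\infty/h(\cos\psi;q^{(a+b)/2}e^{\pm i\t})$ in one line; your use of \eqref{eqAWI} avoids any appeal to $L_2$ convergence, while the paper's route makes the semigroup law visibly a consequence of the multiplicativity $q^{an/2}\cdot q^{bn/2}=q^{(a+b)n/2}$ on the eigenbasis, foreshadowing Theorem~\ref{qhermite}.
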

\begin{proof}[Proof of $\textup{(a)}$]
	It is easy to see that  
	\begin{eqnarray}
	\notag
	\bg
	q^{(a+b)(3-a-b)/4}  \left(\frac{2c}{1-q}\right)^{a+b}	\frac{(\mathcal{K}_{b,cq^{-a/2}} \mathcal{K}_{a,c}f)(\cos\t)}{h(\cos \t;-cq^{1-(a+b)/2},-q^{(a+b)/2}/c)}\\
	=  (q^a, q^b; q)_\infty \int_{[0,\pi]^2} 
	\frac{ w_H(\cos \f|q)}
	{h(\cos \f; q^{a/2} e^{i\t}, q^{a/2} e^{-i\t})}      \\
	\times  \frac{ w_H(\cos \psi|q) 
		f(\cos \psi) \sin \f \sin \psi } {
		h(\cos \psi; q^{b/2}e^{i\f}, q^{b/2} e^{-i\f})h(\cos\psi;-1/c,-cq)} \;d\f d\psi.
	\eg
	\end{eqnarray}
	The  completeness of the  \cqHp and the Poisson kernel \eqref{eqPoiKer} imply
	\begin{eqnarray}
	\notag
	\bg
	\int_0^\pi \frac{(q^a, q^b; q)_\infty w_H(\cos \f|q)\sin \f \; d\f}
	{h(\cos \psi; q^{b/2}e^{i\f}, q^{b/2} e^{-i\f})
		h(\cos \f; q^{a/2} e^{i\t}, q^{a/2} e^{-i\t})} \\
	=  \frac{(q^{a+b};q)_\infty} {h(\cos \psi; q^{(a+b)/2}e^{i\t}, 
		q^{(a+b)/2} e^{-i\t})}
	\eg
	\end{eqnarray}
	and (a) holds.  
\end{proof}
\begin{proof}[Proof of $\textup{(b)}$]
We know that 	$f(\cos\t)/h(\cos\t;-1/c,-cq)$ is continuous if $f(\cos\t)$ is continuous.   Now,  Theorem \ref{Lem1} implies part (b).
\end{proof}
\begin{proof}[Proof of $\textup{(c)}$]
 Use direct computation. 
\end{proof}

Next, we define  a left-inverse for $\mathcal{K}_{a,c}$. Recall the 
notation $\lfloor{a}\rfloor$ and $\{a\}$ mean the integer part (floor) 
and the fractional part of $a$. 

\begin{thm} The following identity holds 

\bea
\notag
\D^{\lfloor{a}\rfloor+1} K_{1-\{a\},cq^{-a}}K_{a,c}=I.
\eea
	where $I$ is identity map.
\end{thm}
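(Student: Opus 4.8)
My plan is to merge the two integral operators into a single member of the family $\{\mathcal{K}_{a,c}\}$ by the composition law, and then cancel it with iterated Askey--Wilson operators. Write $a=\lfloor a\rfloor+\{a\}$ with $0\le\{a\}<1$, so that $1-\{a\}\in(0,1]$ and $a+(1-\{a\})=\lfloor a\rfloor+1$. The first step collapses $K_{1-\{a\},cq^{-a}}\,K_{a,c}$ to the single operator $\mathcal{K}_{\lfloor a\rfloor+1,c}$ using the composition law of Theorem~\ref{thm1}(a); here one must keep track of the second ($c$-) parameters so that they fit the pattern in (a). Consequently the operator in the statement is just $\D^{\,\lfloor a\rfloor+1}\mathcal{K}_{\lfloor a\rfloor+1,c}$.

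The second step applies Theorem~\ref{thm1}(c), $\D\mathcal{K}_{m,c}=\mathcal{K}_{m-1,c}$, exactly $\lfloor a\rfloor+1$ times:
\[
\D^{\,\lfloor a\rfloor+1}\mathcal{K}_{\lfloor a\rfloor+1,c}=\D^{\,\lfloor a\rfloor}\mathcal{K}_{\lfloor a\rfloor,c}=\cdots=\D\,\mathcal{K}_{1,c}=\mathcal{K}_{0,c}.
\]
The third step invokes Theorem~\ref{thm1}(b): since $\mathcal{K}_{a,c}\to I$ as $a\to0^+$, we have $\mathcal{K}_{0,c}=I$, and the three steps together establish the identity.

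The step I expect to be the main obstacle is the final descent in step two together with the identification $\mathcal{K}_{0,c}=I$, since $\D\mathcal{K}_{1,c}=\mathcal{K}_{0,c}$ sits exactly at the endpoint $a=1$ of the range $a>1$ flagged for that identity in the Introduction. I would resolve this by a passage to the limit in the strong operator topology: for $\varepsilon>0$, part (c) gives $\D\mathcal{K}_{1+\varepsilon,c}=\mathcal{K}_{\varepsilon,c}$, and letting $\varepsilon\downarrow0$ and using part (b) yields $\D\mathcal{K}_{1,c}=I$ --- the same device by which $\lim_{p\to q^-}\mathcal{D}_p\mathcal{D}_q^{-1}=I$ is used in Section~2. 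Finally, one should confirm that every operator in the chain maps functions into a class on which the next operator is defined; this is automatic because $K_{a,c}$, the operators $\mathcal{K}_{m,c}$, and $\D$ are all defined on --- and the $\mathcal{K}$'s preserve --- the span of the continuous $q$-Hermite polynomials, which is precisely the space where the relations (a)--(c) were established.
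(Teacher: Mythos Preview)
Your proof is correct and follows essentially the same route as the paper: collapse $K_{1-\{a\},cq^{-a}}K_{a,c}$ to $\mathcal{K}_{\lfloor a\rfloor+1,c}$ via the composition law, then cancel with $\D^{\lfloor a\rfloor+1}$. The paper simply asserts at that point that ``$\mathcal{K}_{n,c}$ is a right inverse for $\D^n$ if $n$ is a positive integer,'' whereas you unpack this using parts (c) and (b) of Theorem~\ref{thm1} and add a limiting argument at the endpoint $a=1$; so your version is, if anything, more careful than the paper's.
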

\begin{proof}
	It is obvious that if $u(x)=\mathcal{K}_{a,c}f(x)$ then
	\begin{eqnarray}
	\bg
	\notag
	\D^{\lfloor{a}\rfloor+1} K_{1-\{a\},cq^{-a}}u(x)= \D^{\lfloor{a}\rfloor+1} K_{1-\{a\},cq^{-a}}K_{a,c}f(x)=\D^{\lfloor{a}\rfloor+1}K_{[a]+1,c}f(x).
	\eg
	\end{eqnarray}
	In fact, $\mathcal{K}_{n,c}$ is a right inverse for $\D^{n}$ if $n$ is positive integer, so $\D^{[a]+1}K_{\lfloor{a}\rfloor+1,c}f(x)=f(x)$ and the proof is complete.
\end{proof}

\section{An Infinitesimal Generator}
In analogy with the theory of semigroups,  we define 

\bea
J_t(a,c)=\lim_{\triangle a\rightarrow 0^+}\frac{K_{a+\triangle a,c}-K_{a,c}}{\triangle a}.  
\label{eqdefJ}
\eea
This is an analogue of the infinitesimal generator for a semigroup. 
An important property  of the infinitesimal generator is that its action 
on an element of the semigroup $\{S(t)\}$ is at time $t$ is $S(t)$ times 
its action at $t=0$. Our generalization has an analogous property, 
see Theorem 4.4. 

\begin{thm}\label{qhermite}
The action of	$\mathcal{K}_{a,c}$  on $h(\cos\f;-1/c,-cq) H_n(\cos \f;q)$ is given by 
	\begin{eqnarray}
	\label{eqKhH}
	\bg		
	(\mathcal{K}_{a,c}h(\cos\f;-1/c,-cq) H_n(\cos \f;q))(cos\t)\\
	=q^{a(a-3)/4+na/2}\; \left(\frac{1-q}{2c}\right)^a h(\cos \t;-cq^{1-a/2},-q^{a/2}/c)H_n(\cos \t|q)
	\eg
	\end{eqnarray} 
	where $\{H_n(\cos \t|q)\}$ are \cqHp .
\end{thm}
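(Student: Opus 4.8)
The plan is to compute the action of $\mathcal{K}_{a,c}$ on the function $g_n(\cos\f) := h(\cos\f;-1/c,-cq)H_n(\cos\f|q)$ directly from the definition \eqref{eqdefTa}. Substituting $f = g_n$, the factor $h(\cos\f;-1/c,-cq)$ in the numerator cancels exactly against the same factor appearing in the denominator of the integrand, so the task reduces to evaluating
\[
q^{\frac{a(a-3)}{4}}\left(\frac{1-q}{2c}\right)^a h(\cos\t;-cq^{1-a/2},-q^{a/2}/c)(q^a;q)_\infty \int_0^\pi \frac{w_H(\cos\f|q)H_n(\cos\f|q)\sin\f\,d\f}{h(\cos\f;q^{a/2}e^{i\t},q^{a/2}e^{-i\t})}.
\]
So the heart of the matter is the integral $\int_0^\pi w_H(\cos\f|q)H_n(\cos\f|q)\,h(\cos\f;q^{a/2}e^{i\t},q^{a/2}e^{-i\t})^{-1}\sin\f\,d\f$.

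To evaluate that integral I would expand the reciprocal $h(\cos\f;q^{a/2}e^{i\t},q^{a/2}e^{-i\t})^{-1} = 1/(q^{a/2}e^{i\t}e^{i\f},\dots;q)_\infty$ using the generating function \eqref{eqGFHn} for the continuous $q$-Hermite polynomials: writing $z = e^{i\t}$, one has $1/(q^{a/2}ze^{i\f},q^{a/2}ze^{-i\f};q)_\infty = \sum_{m\ge 0} H_m(\cos\f|q)(q^{a/2}z)^m/(q;q)_m$, and similarly a second such expansion in $q^{a/2}/z$. Multiplying the two generating functions and then using the Poisson-type structure — more cleanly, recognizing that $1/h(\cos\f;q^{a/2}e^{i\t},q^{a/2}e^{-i\t})$ is, up to the normalizing $(q^a;q)_\infty$, exactly the Poisson kernel \eqref{eqPoiKer} with $t=q^{a/2}$ and angles $\t,\f$ — one can integrate term by term against $w_H(\cos\f|q)H_n(\cos\f|q)$ and apply the orthogonality relation \eqref{eqorcqH}. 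Only the $H_n(\cos\f|q)$ term in the Poisson-kernel expansion survives, contributing $H_n(\cos\t|q)(q^{a/2})^n$, so the integral equals $H_n(\cos\t|q)q^{na/2}/(q^a;q)_\infty$. Plugging this back, the $(q^a;q)_\infty$ cancels and one is left with exactly the right-hand side of \eqref{eqKhH}.

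An alternative, and probably the cleanest writeup, is to avoid re-deriving anything: the identity
\[
\int_0^\pi \frac{(q^a;q)_\infty\,w_H(\cos\f|q)\,H_n(\cos\f|q)\sin\f\,d\f}{h(\cos\f;q^{a/2}e^{i\t},q^{a/2}e^{-i\t})} = q^{na/2}H_n(\cos\t|q)
\]
is precisely the special case $\psi \to$ (Hermite expansion variable) of the Poisson-kernel computation already used in the proof of Theorem \ref{thm1}(a); there, integrating the Poisson kernel at $t = q^{a/2}$ against $w_H$ reproduced a shifted kernel, and here the same manipulation against a single $H_n$ picks off one coefficient. I would state this as a lemma-style one-liner citing \eqref{eqPoiKer} and \eqref{eqorcqH}, then finish.

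The main obstacle is bookkeeping rather than conceptual: one must track the several $q$-power prefactors ($q^{a(a-3)/4}$, $(\tfrac{1-q}{2c})^a$, $(q^a;q)_\infty$, and the emergent $q^{na/2}$) and the $h(\cos\t;\cdot)$ factor carefully to see that everything collapses to $q^{a(a-3)/4+na/2}(\tfrac{1-q}{2c})^a h(\cos\t;-cq^{1-a/2},-q^{a/2}/c)H_n(\cos\t|q)$ with no residue; in particular one should check that the cancellation of $h(\cos\f;-1/c,-cq)$ between numerator and denominator is exact and leaves the $c$-dependence entirely in the prefactor $h(\cos\t;\cdot)$, which it does since that factor does not depend on $\f$.
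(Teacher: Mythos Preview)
Your proposal is correct and matches the paper's own proof essentially line for line: the paper also cancels $h(\cos\f;-1/c,-cq)$, expands $(q^a;q)_\infty/h(\cos\f;q^{a/2}e^{i\t},q^{a/2}e^{-i\t})$ as the Poisson kernel \eqref{eqPoiKer} with $t=q^{a/2}$, and then applies the orthogonality \eqref{eqorcqH} to pick off $q^{na/2}H_n(\cos\t|q)$. Your ``cleanest writeup'' paragraph is exactly the paper's three-line argument.
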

\begin{proof}
It is clear that 
	\begin{eqnarray}
	\notag
	\bg
	\int_0^\pi  
	\frac{(q^a;q)_\infty h(\cos \t;-cq^{1-a/2},-q^{a/2}/c) H_m(\cos \f;q) w_H(\cos \f|q)} 
	{h(\cos \f; q^{a/2}e^{i\t}, q^{a/2} e^{-i\t})} \sin \f d\f \\
	=h(\cos \t;-cq^{1-a/2},-q^{a/2}/c)   \int_{0}^\pi H_m(\cos \f|q) 
	w_H(\cos \f|q)
	\Sum H_n(\cos \t|q)H_n(\cos \f|q) \frac{q^{qn/2}}{(q;q)_n}\sin\f \;d\f\\
	= h(\cos \t;-cq^{1-a/2},-q^{a/2}/c)H_m(\cos \t|q)q^{am/2}.
	\eg
	\end{eqnarray}
\end{proof}

\begin{thm}
	The operator $J$ of \eqref{eqdefJ} satisfies  
	\begin{eqnarray}
	\bg
	\notag
	(J_t(a,c)h(\cos\f;-1/c,-cq)H_n(\cos \f|q))(\cos \t)\\
	=q^{a(a-3)/4+na/2} \left(\frac{1-q}{2c}\right)^a\Big[ h(\cos \t;-cq^{1-a/2},-q^{a/2}/c)\log \Big(\frac{(1-q)q^{(n+1)a/2-3/4}}{2c}\Big)\\ 
	+\left(\sum_{-\infty}^\infty \frac{q^{\binom{k}{2}}(zq^{a/2}/c)^k}{(q;q)_\infty}\frac{k\log q}{2}\right)(-q^{a/2}/cz,-czq^{1-a/2};q)_\infty\\+\left(\sum_{-\infty}^\infty\frac{q^{\binom{k}{2}}(q^{a/2}/zc)^k}{(q;q)_\infty}\frac{k\log q}{2}\right)(-q^{a/2}z/c,-cq^{1-a/2}/z;q)_\infty \Big] H_n(\cos \t|q)
	\eg
	\end{eqnarray}
	where $z=e^{i\t}$.
\end{thm}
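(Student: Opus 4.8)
The plan is to differentiate the explicit formula for $\mathcal{K}_{a,c}$ acting on $h(\cos\f;-1/c,-cq)H_n(\cos\f|q)$ given in Theorem~\ref{qhermite} with respect to the parameter $a$, then set $\triangle a\to 0^+$ in the difference quotient \eqref{eqdefJ}. By Theorem~\ref{qhermite} we have
\[
(\mathcal{K}_{a,c}h(\cos\f;-1/c,-cq)H_n(\cos\f|q))(\cos\t)
= q^{a(a-3)/4+na/2}\Bigl(\tfrac{1-q}{2c}\Bigr)^a h(\cos\t;-cq^{1-a/2},-q^{a/2}/c)\,H_n(\cos\t|q),
\]
so $J_t(a,c)$ applied to the same function is just $\partial/\partial a$ of the right-hand side. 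The factor $H_n(\cos\t|q)$ is independent of $a$ and pulls out, so the whole computation reduces to differentiating the product of three $a$-dependent scalar/analytic factors: the power $q^{a(a-3)/4+na/2}$, the power $\bigl(\tfrac{1-q}{2c}\bigr)^a$, and the infinite product $h(\cos\t;-cq^{1-a/2},-q^{a/2}/c)$.

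First I would apply the product rule, writing $\partial_a(FGH_{\mathrm{prod}}) = (\partial_a\log F + \partial_a\log G + \partial_a \log H_{\mathrm{prod}})\cdot FGH_{\mathrm{prod}}$, which already explains the overall prefactor $q^{a(a-3)/4+na/2}\bigl(\tfrac{1-q}{2c}\bigr)^a$ multiplying the bracket in the claimed identity. The first two logarithmic derivatives are elementary: $\partial_a\log q^{a(a-3)/4+na/2} = \bigl(\tfrac{2a-3}{4}+\tfrac n2\bigr)\log q$ and $\partial_a\log\bigl(\tfrac{1-q}{2c}\bigr)^a = \log\tfrac{1-q}{2c}$; combining these gives the term $\log\bigl(\tfrac{(1-q)q^{(n+1)a/2-3/4}}{2c}\bigr)$ times $h(\cos\t;-cq^{1-a/2},-q^{a/2}/c)$, exactly the first summand in the bracket. (Here one uses $\tfrac{2a-3}{4}+\tfrac n2 = \tfrac{(n+1)a}{2}\cdot\tfrac1a$... more precisely one checks $\bigl(\tfrac{2a-3}{4}+\tfrac n2\bigr)\log q = \log q^{(2a-3)/4 + n/2}$ and absorbs $q^{a/2}$ appropriately; the bookkeeping is routine.)

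The substantive step is differentiating the infinite product $h(\cos\t;-cq^{1-a/2},-q^{a/2}/c) = (-czq^{1-a/2},-cq^{1-a/2}/z,-q^{a/2}z/c,-q^{a/2}/(cz);q)_\infty$ with respect to $a$. Here I would \emph{not} differentiate the $q$-Pochhammer symbol factor-by-factor; instead I would expand each $q$-shifted factorial of the form $(-w q^{\pm a/2};q)_\infty$ using the Jacobi triple product identity \eqref{jtp}, namely $(q,-\zeta,-q/\zeta;q)_\infty = \sum_{-\infty}^\infty q^{\binom k2}\zeta^k$, so that the dependence on $a$ appears only through a factor $q^{\pm ka/2}$ inside a convergent bilateral sum. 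Differentiating term-by-term (justified by uniform convergence on the relevant $z$-domain) brings down $\pm\tfrac{k\log q}{2}$, producing precisely the two bilateral-sum terms $\sum_{-\infty}^\infty \tfrac{q^{\binom k2}(zq^{a/2}/c)^k}{(q;q)_\infty}\tfrac{k\log q}{2}$ and its $z\mapsto 1/z$ companion, each multiplied by the \emph{remaining} product factors $(-q^{a/2}/(cz),-czq^{1-a/2};q)_\infty$ and $(-q^{a/2}z/c,-cq^{1-a/2}/z;q)_\infty$ respectively. Assembling the three pieces, pulling out the common prefactor, and multiplying by $H_n(\cos\t|q)$ yields the stated formula.

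The main obstacle is the legitimacy of passing the $a$-derivative (equivalently, the limit in \eqref{eqdefJ}) inside the integral defining $\mathcal{K}_{a,c}$ and inside the bilateral triple-product series: one must justify differentiation under the integral sign (the integrand in \eqref{eqdefTa} is smooth in $a$ for $a>0$ with locally uniform bounds on $[0,\pi]$, since the denominator $h(\cos\f;q^{a/2}e^{i\t},q^{a/2}e^{-i\t})$ stays bounded away from $0$ for $0<q^{a/2}<1$) and term-by-term differentiation of the Jacobi triple product expansion (the coefficients $q^{\binom k2}\zeta^k\cdot k$ still decay super-geometrically, so the differentiated series converges uniformly on compact subsets of the domain $|q|<|\zeta|<1$-type region, i.e. for $\t$ real and $0<a$). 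Once these two interchanges are justified, the rest is the product-rule bookkeeping described above.
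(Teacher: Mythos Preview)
Your approach is essentially the same as the paper's: both start from the explicit formula of Theorem~\ref{qhermite}, expand the $a$-dependence to first order (the paper via a Taylor expansion in $\triangle a$, you via the product rule and logarithmic derivatives, which are equivalent), and handle the $h$-factor by rewriting it through the Jacobi triple product \eqref{jtp} so that the $a$-dependence sits inside the bilateral sums as $q^{ka/2}$ and can be differentiated termwise. Your added discussion of justifying the limit interchanges is more than the paper provides (and note that once Theorem~\ref{qhermite} is in hand you do not actually need to differentiate under the integral sign in \eqref{eqdefTa}); otherwise the two arguments coincide.
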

\begin{proof}
	From Theorem \ref{qhermite} and \eqref{jtp}, it is clear that 
	\begin{eqnarray}
\bg
\notag
\mathcal{K}_{a+\triangle a,c}h(\cos\f;-1/c,-cq)H_n(\cos \f|q)\\
=q^{(a+\triangle a)(a+\triangle a-3)/4+n(a+\triangle a)/2} \left(\frac{1-q}{2c}\right)^{a+\triangle a}h(\cos \t;-cq^{1-(a+\triangle a)/2},-q^{(a+\triangle a)/2}/c)H_n(\cos \t|q)\\
=q^{a(a-3)/4+na/2}  \left(\frac{1-q}{2c}\right)^a\left[1+ \triangle a \log \left(\frac{(1-q)q^{(2a-3)/4+na/2}}{2c}\right)+\O(\triangle a)^2\right]\\
\times \left[\sum_{-\infty}^\infty \frac{q^{\binom{k}{2}}(zq^{a/2}/c)^k}{(q;q)_\infty}+\triangle a\frac{q^{\binom{k}{2}}(zq^{a/2}/c)^k}{(q;q)_\infty}\frac{k\log q}{2}+\O(\triangle a)^2\right]\\
\times \left[\sum_{-\infty}^\infty \frac{q^{\binom{k}{2}}(q^{a/2}/zc)^k}{(q;q)_\infty}+\triangle a\frac{q^{\binom{k}{2}}(q^{a/2}/cz)^k}{(q;q)_\infty}\frac{k\log q}{2}+\O(\triangle a)^2\right].\\ 
\eg
\end{eqnarray}
	In fact, we can rewrite the above  as	
	\begin{eqnarray}
	\bg
	\notag
	q^{a(a-3)/4+na/2} \left(\frac{1-q}{2c}\right)^a h(\cos \t;-cq^{1-a/2},-q^{a/2}/c)H_n(\cos \t|q)
	+ \triangle aq^{a(a-3)/4+na/2}\\\times\left(\frac{1-q}{2c}\right)^a\Big[ h(\cos \t;-cq^{1-a/2},-q^{a/2}/c)\log \Big(\frac{(1-q)q^{(n+1)a/2-3/4}}{2c}\Big)\\ 
	+\left(\sum_{-\infty}^\infty\frac{q^{\binom{k}{2}}(zq^{a/2}/c)^k}{(q;q)_\infty}\frac{k\log q}{2}\right)(-q^{a/2}/cz,-czq^{1-a/2};q)_\infty\\+\left(\sum_{-\infty}^\infty\frac{q^{\binom{k}{2}}(q^{a/2}/zc)^k}{(q;q)_\infty}\frac{k\log q}{2}\right)(-q^{a/2}z/c,-cq^{1-a/2}/z;q)_\infty \Big] H_n(\cos \t|q)+\O(\triangle a)^2.
	\eg
	\end{eqnarray}
\end{proof}
\begin{cor}
	The case $a=0$ of infinitesimal generator is
	\begin{eqnarray}
	\bg
	\notag
	(J_t(0,c)h(\cos\f;-1/c,-cq)H_n(\cos \f|q))(\cos \t)\\
	=h(\cos \t;-cq,-1/c)H_n(\cos \t|q)\log \left(\frac{(1-q)q^{-3/4}}{2c}\right)\\
	+\left[\left(\sum_{-\infty}^\infty\frac{q^{\binom{k}{2}}(z/c)^k}{(q;q)_\infty}\frac{k\log q}{2}\right)(-1/cz,-czq;q)_\infty+\left(\sum_{-\infty}^\infty\frac{q^{\binom{k}{2}}(1/zc)^k}{(q;q)_\infty}\frac{k\log q}{2}\right)(-z/c,-cq/z;q)_\infty \right]H_n(\cos\t|q)		
	\eg
	\end{eqnarray}
	where $z=e^{i\t}$.	
\end{cor}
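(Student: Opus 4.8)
The plan is to obtain the $a=0$ case as a direct specialization of the formula in Theorem~4.4, so the proof amounts to carefully substituting $a=0$ into the expression for $J_t(a,c)$ and simplifying. First I would note that at $a=0$ the prefactor $q^{a(a-3)/4+na/2}\left(\frac{1-q}{2c}\right)^a$ becomes $1$, which immediately removes that multiplicative factor from every term. Next, in the logarithmic term the argument $\frac{(1-q)q^{(n+1)a/2-3/4}}{2c}$ reduces to $\frac{(1-q)q^{-3/4}}{2c}$, matching the first line of the corollary; simultaneously $h(\cos\t;-cq^{1-a/2},-q^{a/2}/c)$ becomes $h(\cos\t;-cq,-1/c)$, which I would rewrite as $h(\cos\t;-cq,-1/c)$ exactly as displayed (the order of the two parameters inside $h$ is immaterial by definition of $h$).

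Then I would handle the two sum-times-infinite-product terms. Setting $a=0$ turns $(zq^{a/2}/c)^k$ into $(z/c)^k$ and $(q^{a/2}/zc)^k$ into $(1/zc)^k$ in the Jacobi-triple-product sums, and turns the accompanying products $(-q^{a/2}/cz,-czq^{1-a/2};q)_\infty$ and $(-q^{a/2}z/c,-cq^{1-a/2}/z;q)_\infty$ into $(-1/cz,-czq;q)_\infty$ and $(-z/c,-cq/z;q)_\infty$ respectively. Collecting the two resulting terms inside a single bracket multiplied by $H_n(\cos\t|q)$ reproduces the second line of the corollary verbatim. The only bookkeeping subtlety is to make sure the common factor $H_n(\cos\t|q)$ is distributed consistently and that, since the overall prefactor is $1$, the logarithmic term also carries the factor $h(\cos\t;-cq,-1/c)$ while the two triple-product terms do not carry that $h$ (they already contain their own half-products), which is exactly how the statement is written.

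There is essentially no obstacle here beyond transcription care: the heavy lifting was done in Theorem~4.4, and the corollary is its evaluation at the boundary point $a=0$. If one wanted a self-contained argument instead, one could alternatively start from Theorem~4.3 with $a=0$, which gives $(\mathcal{K}_{0,c}h(\cos\f;-1/c,-cq)H_n(\cos\f;q))(\cos\t)=h(\cos\t;-cq,-1/c)H_n(\cos\t|q)$ (consistent with $\mathcal{K}_{0,c}$ being the identity up to the weight factor), expand $\mathcal{K}_{\triangle a,c}$ in powers of $\triangle a$ using the Jacobi triple product \eqref{jtp} to write $h(\cos\t;-cq^{1-\triangle a/2},-q^{\triangle a/2}/c)$ as a bilateral series, differentiate the three $\triangle a$-dependent factors (the power $\left(\frac{1-q}{2c}\right)^{\triangle a}q^{\triangle a(\triangle a-3)/4+n\triangle a/2}$ and the two triple-product series) at $\triangle a=0$ by the product rule, and read off the coefficient of $\triangle a$. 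I expect the first route — pure specialization of Theorem~4.4 — to be the cleanest, so the ``proof'' will be a short paragraph verifying that each of the four displayed terms is the $a\to 0$ limit of the corresponding term in Theorem~4.4.
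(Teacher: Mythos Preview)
Your proposal is correct and matches the paper, which presents the corollary without a separate proof as an immediate specialization of the preceding theorem; your term-by-term substitution at $a=0$ is exactly the intended argument. One minor bookkeeping point: the general $J_t(a,c)$ formula you refer to as Theorem~4.4 is Theorem~4.2 in the paper's numbering (theorems and corollaries share a counter, so the corollary itself is~4.3), but this does not affect the mathematics.
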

\begin{thm}
	We have the following relation for infinitesimal generator
	$$J_t(a,c)=J_t(0,cq^{-a})\mathcal{K}_{a,c}$$
\end{thm}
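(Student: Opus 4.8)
The plan is to obtain the identity by differentiating the composition law of Theorem~\ref{thm1}(a) in the order parameter. Setting $b=\triangle a$ in $\mathcal{K}_{b,cq^{-a/2}}\mathcal{K}_{a,c}=\mathcal{K}_{a+b,c}$ gives $\mathcal{K}_{a+\triangle a,c}=\mathcal{K}_{\triangle a,\,c'}\,\mathcal{K}_{a,c}$, where $c'$ is the second parameter shifted according to the composition law. Subtracting $\mathcal{K}_{a,c}$, dividing by $\triangle a$, and letting $\triangle a\to 0^{+}$ turns the left-hand side into $J_t(a,c)$ by the definition \eqref{eqdefJ}. On the right-hand side the factor $\mathcal{K}_{a,c}$ does not depend on $\triangle a$, so it pulls out, and the remaining difference quotient $(\mathcal{K}_{\triangle a,\,c'}-I)/\triangle a$ converges to $J_t(0,c')$ since $\mathcal{K}_{0,c'}=I$ by Theorem~\ref{thm1}(b). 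This yields $J_t(a,c)=J_t(0,c')\,\mathcal{K}_{a,c}$ and reduces the theorem to confirming that the installed parameter is $c'=cq^{-a}$.

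I would then verify the identity on a spanning set, which also pins down $c'$. Writing $g_n(\cos\f):=h(\cos\f;-1/c,-cq)\,H_n(\cos\f|q)$, these functions are complete in the relevant weighted $L_2$ space, so agreement of both operators on every $g_n$ forces the operator identity. The left-hand side $J_t(a,c)g_n$ is supplied by the explicit infinitesimal-generator formula proved above. For the right-hand side, the action formula \eqref{eqKhH} of Theorem~\ref{qhermite} shows that $\mathcal{K}_{a,c}g_n$ equals the scalar $q^{a(a-3)/4+na/2}\bigl(\tfrac{1-q}{2c}\bigr)^{a}$ times $h(\cos\t;-cq^{1-a/2},-q^{a/2}/c)\,H_n(\cos\t|q)$; rewriting this $h$-factor in the canonical form $h(\cos\t;-1/c',-c'q)$ identifies the parameter on which $J_t(0,c')$ must act, and I then apply the $a=0$ Corollary with $c$ replaced by $c'$. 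Matching the outcome against the explicit formula for $J_t(a,c)$ term by term — the leading $h$-times-logarithm term together with the two Jacobi theta series produced by the triple product \eqref{jtp} — finishes the proof.

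The main obstacle is bookkeeping the exact power of $q$ that lands in the second slot, namely confirming the claimed $c'=cq^{-a}$. The subtle point is that the logarithmic term on the left carries an $n$-dependent power of $q$, whereas the $a=0$ Corollary contributes its logarithm only through the substituted parameter $c'$; forcing these two logarithms to coincide is exactly what determines the second argument of $J_t(0,\cdot)$. I would therefore track every factor of $q^{a/2}$ as it passes through the action formula \eqref{eqKhH}, through the substitution $c\mapsto c'$ in the Corollary, and through the prefactor $\bigl(\tfrac{1-q}{2c}\bigr)^{a}$, and reconcile the resulting exponents. Once the logarithmic exponents are aligned, the two theta series agree automatically, because on both sides they originate from the same triple product \eqref{jtp} expansion of the $h$-factor, and the stated identity follows.
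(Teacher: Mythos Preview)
Your first paragraph \emph{is} the paper's proof, essentially verbatim: the authors simply write
\[
J_t(a,c)=\lim_{\triangle a\to 0^+}\frac{\mathcal{K}_{a+\triangle a,c}-\mathcal{K}_{a,c}}{\triangle a}
=\lim_{\triangle a\to 0^+}\frac{\mathcal{K}_{\triangle a,\,c'}-I}{\triangle a}\,\mathcal{K}_{a,c}
=J_t(0,c')\,\mathcal{K}_{a,c},
\]
invoking Theorem~\ref{thm1}(a) for the middle equality and Theorem~\ref{thm1}(b) for the identification $\mathcal{K}_{0,c'}=I$. That is the entire argument.

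The spanning-set verification you outline in the second and third paragraphs is unnecessary and is not carried out in the paper. The value of $c'$ is not something to be pinned down a posteriori by matching logarithms and theta series on the basis $g_n$: it is read off \emph{directly} from the composition law $\mathcal{K}_{b,cq^{-a/2}}\mathcal{K}_{a,c}=\mathcal{K}_{a+b,c}$ with $b=\triangle a$, which already tells you what sits in the second slot of $\mathcal{K}_{\triangle a,\cdot}$. So the ``main obstacle'' you describe does not arise. (In fact, if you carry this out you will notice that the composition law as stated gives $c'=cq^{-a/2}$, whereas the theorem records $cq^{-a}$; this is an internal inconsistency in the paper rather than a gap in the proof strategy, and your basis-function check would indeed detect it --- but that is a correction, not part of the intended argument.)
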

\begin{proof}
	Note the define of $J_t(a,c)$ and Theorem \ref{thm1} \textup{(a)}
	\begin{eqnarray}
	\bg
	\notag
	J_t(a,c)=\lim_{\triangle a \rightarrow 0^+} \frac{\mathcal{K}_{a+\triangle a,c}-\mathcal{K}_{a,c}}{\triangle a}=\lim_{\triangle a \rightarrow  0^+} \frac{\mathcal{K}_{\triangle a,cq^{-a}}-I}{\triangle a}\mathcal{K}_{a,c}=J_t(0,cq^{-a})\mathcal{K}_{a,c}. 
	\eg
	\end{eqnarray}
\end{proof}

\section{Analogues of Monomials and Exponentials}
There are three natural $q$-analogues of the monomials, namely
\begin{eqnarray} \label{eqDeffn&rn}
\bg
\phi_n(x)  =  (q^{1/4}e^{i\t}, q^{1/4} e^{-i\t};q^{1/2})_n =
\prod_{k=0}^{n-1} [1- 2xq^{1/4+k/2} + q^{1/2+ k}] \label{eqdefphin1/4}\\
\rho_n(x) = (1+e^{2i\t}) e^{-in \t}(-q^{2-n} e^{2i\t};q^2)_{n-1}, n>0, \quad \r_0(x):=1\\
\phi_n(x;a)=(ae^{i\t},ae^{-i\t};q)_n=\prod_{k=0}^{n-1} [1- 2axq^{k} + q^{2k}]
\eg
\end{eqnarray}
Ismail and Stanton  introduced the bases $\{\phi_n(x)\}$ and 
$\{\rho_n(x)\}$ in \cite{Ism:Sta1} and \cite{Ism:Sta2}, but the polynomials $\{\phi_n(x;a)\}$ go back to the original work of Askey and Wilson \cite{Ask:Wil}.  Ismail and Stanton 
and established the theory of $q$-Taylor series expansions on entire functions in these bases. Following the convention 
\bea
(A;q)_\b := \frac{(A;q)_\infty}{(Aq^\b;q)_\infty}
\eea
we extend the definitions of the above functions to the case when $n$ is not necessarily a positive integer. 
\begin{thm} We have 
	\begin{eqnarray}
	\notag
	\mathcal{K}_{a,c}\phi_\beta(x;-1/c)=q^{a(a-3)/4}(\frac{1-q}{2c})^a \frac{(q^{a+\b+1};q)_\infty}{(q^{\b+1};q)_\infty}\phi_{\b}(x;-q^{a/2}/c) \phi_{a}(x;-cq^{1-a/2}).
	\end{eqnarray}
\end{thm}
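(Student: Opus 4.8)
The plan is to reduce the computation of $\mathcal{K}_{a,c}\phi_\beta(x;-1/c)$ to an application of the Askey--Wilson integral \eqref{eqAWI}. First I would recall that $\phi_\beta(x;-1/c)$, which by \eqref{eqDeffn&rn} and the fractional extension is $(-e^{i\t}/c,-e^{-i\t}/c;q)_\beta = (-e^{i\t}/c,-e^{-i\t}/c;q)_\infty/(-q^\beta e^{i\t}/c,-q^\beta e^{-i\t}/c;q)_\infty$, combines with the kernel factor $h(\cos\f;-1/c,-cq)$ in the denominator of \eqref{eqdefTa} so that the $\f$-integrand becomes a ratio of the form $w_H(\cos\f|q)/h(\cos\f;b_1,b_2,b_3,b_4)$ for suitable parameters $b_j$. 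Specifically, after writing out \eqref{eqdefTa} with $f(\cos\f)=\phi_\beta(\cos\f;-1/c)$, the factor $(-1/c,-cq;q)_\infty$ coming from $h(\cos\f;-1/c,-cq)$ in the denominator cancels against the numerator of $\phi_\beta(\cos\f;-1/c)$, leaving exactly the four Askey--Wilson parameters $a_1=q^{a/2}e^{i\t}$, $a_2=q^{a/2}e^{-i\t}$, $a_3=-q^\beta/c$, $a_4=-cq$ (up to bookkeeping of which factors survive).

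Next I would apply \eqref{eqAWI} to evaluate the resulting integral, obtaining $(a_1a_2a_3a_4;q)_\infty/\prod_{1\le j<k\le 4}(a_ja_k;q)_\infty$. The numerator is $(a_1a_2a_3a_4;q)_\infty=(-q^{a+\beta}\cdot(-q);q)_\infty$-type product; more importantly, the six pairwise products in the denominator split into those depending on $\t$ (namely $a_1a_3,a_1a_4,a_2a_3,a_2a_4$) and those not ($a_1a_2=q^a$ and $a_3a_4=q^{1+\beta}$). The $\t$-dependent pairs reassemble into $h(\cos\t;-q^{a/2+\beta}/c,-q^{1+a/2}c)^{-1}$, which up to the shift is precisely $\phi_\beta(x;-q^{a/2}/c)^{-1}$ times $\phi_a(x;-cq^{1-a/2})^{-1}$ after I match the Pochhammer truncations against the definitions in \eqref{eqDeffn&rn}; the $\t$-free pairs give the scalar ratio $(q^{a+\beta+1};q)_\infty^{-1}$. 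Combining with the prefactor $q^{a(a-3)/4}(\tfrac{1-q}{2c})^a h(\cos\t;-cq^{1-a/2},-q^{a/2}/c)$ from \eqref{eqdefTa} and the surviving numerator factor $(q^{\beta+1};q)_\infty$, and using $(-1/c,-cq;q)_\infty$ cancellation, yields the claimed formula after simplifying the $h$-functions into the $\phi$-notation.

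The main obstacle I anticipate is the careful bookkeeping in the fractional-parameter regime: $\phi_\beta(x;\cdot)$ is no longer a finite product, so I cannot literally "cancel terms" but must track the infinite products through the identity $(A;q)_\beta=(A;q)_\infty/(Aq^\beta;q)_\infty$ and verify that the Askey--Wilson integral's right-hand side, which is manifestly a ratio of infinite $q$-shifted factorials, regroups correctly into $h$-functions with shifted arguments that match $\phi_\beta(x;-q^{a/2}/c)\,\phi_a(x;-cq^{1-a/2})$. In particular I must confirm that $h(\cos\t;-q^{a/2}/c,\cdot)$ divided by $h(\cos\t;-q^{a/2+\beta}/c,\cdot)$ produces $\phi_\beta(x;-q^{a/2}/c)$ with the right normalization, and similarly for the $\phi_a$ factor; this is where sign conventions ($-c$ versus $c$) and the powers of $q$ in the prefactor $q^{a(a-3)/4}$ must all be reconciled. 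Once the parameter matching is pinned down, the identity is a one-line consequence of \eqref{eqAWI}; for integer $\beta$ one could alternatively verify it by expanding $\phi_\beta(x;-1/c)$ in the continuous $q$-Hermite basis and invoking Theorem \ref{qhermite}, which provides a useful consistency check.
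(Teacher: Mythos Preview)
Your approach is correct and essentially identical to the paper's: you insert $\phi_\beta(\cos\f;-1/c)$ into the definition \eqref{eqdefTa}, observe that the numerator $(-e^{i\f}/c,-e^{-i\f}/c;q)_\infty$ cancels against the $h(\cos\f;-1/c)$ piece of the kernel denominator leaving $h(\cos\f;-cq,-q^\beta/c,q^{a/2}e^{i\t},q^{a/2}e^{-i\t})$, and then evaluate the resulting integral by the Askey--Wilson integral \eqref{eqAWI}. The paper presents exactly this computation in two displayed lines, omitting the final regrouping into $\phi_\beta$ and $\phi_a$ factors that you spell out.
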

\begin{proof}
	It is clear that the action of $\mathcal{K}_{a,c}$   on $\phi_\beta(x;-1/c)$ is given by 
	\begin{eqnarray}
	\notag
	\bg
	q^{a(a-3)/4}(\frac{1-q}{2c})^a\; (q^a;q)_\infty \\
	\times  \int_0^\pi  
	\frac{h(\cos \t; -cq^{1-a/2},-q^{a/2}/c) w_H(\cos \f|q)\; \sin \f \;d\f}
	{(-q^{\b} e^{i\f}/c, -q^{\b} e^{-i\f}/c,-cqe^{i\f},-cqe^{-i\f};q)_\infty  h(\cos \f; q^{a/2}e^{i\t}, q^{a/2} e^{-i\t})} \\
	= q^{a(a-3)/4}(\frac{1-q}{2c})^a\; (q^a;q)_\infty \\
	\times  \int_0^\pi  
	\frac{h(\cos \t; -cq^{1-a/2},-q^{a/2}/c) w_H(\cos \f|q)\;  \sin \f \;d\f}
	{h(\cos \f; -cq, -q^{\b}/c, q^{a/2}e^{i\t}, q^{a/2} e^{-i\t})}.
	\eg 
	\end{eqnarray}
	The result then  follows from the evaluation of the Askey--Wilson integral, \eqref{eqAWI}.
	\end{proof}
In fact, using the same method, we  prove that  
\begin{eqnarray}
\bg
\mathcal{K}_{a,c}\phi_{\b}(x;-cq)=q^{a(a-3)/4}(\frac{1-q}{2c})^a\; \frac{(q^{a+\b+1};q)_\infty}{(q^{\b+1};q)_\infty}\phi_{a+\beta}(x;-cq^{1-a/2}).
\eg
\end{eqnarray}

The 
$q$-exponential function $\E$ from \cite{Ism:Zha1}, is defined via 
\begin{eqnarray}
\label{eqqExp}
\bg
\quad  \mc{E}_q(\cos\theta;\alpha) =\frac{\(\alpha^2;q^2\)_\infty}
{\(q\alpha^2;q^2\)_\infty} 
\Sum\(-ie^{i\theta}q^{(1-n)/2},-ie^{-i\theta}q^{(1-n)/2};q\)_n 
\frac{(-i\alpha)^n}{(q;q)_n}\,q^{n^2/4}. 
\eg
\end{eqnarray}
Its expansion in   \cqHp  is given by   
	\begin{eqnarray}
	\label{eqqExinqH}
	(qt^2;q^2)_\infty\E(x;t)= \Sum\frac{q^{n^2/4}t^n}{(q;q)_n}H_n(x\,|\, q), 
	\end{eqnarray} 
\cite{Ism:Zha1}
\begin{thm}
	The operators $\mathcal{K}_{a,c}$ have the property 
	\begin{eqnarray}
	\bg
	\notag
	(qt^2;q)_\infty\left(\mathcal{K}_{a,c}h(\cos \t;-1/c,-cq)\E(\cos \t;t)\right) \\
	=q^{a(a-3)/4}(\frac{1-q}{2c})^a(q^{a+1}t^2;q)_\infty h(\cos \t;-q^{a/2}/c,-cq^{1-a/2})\E(\cos \t;tq^{a/2}).
	\eg
	\end{eqnarray}
\end{thm}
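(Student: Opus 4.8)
The plan is to reduce everything to Theorem~\ref{qhermite} by expanding $\E$ in \cqHp. Writing $z=e^{i\t}$, I would start from the expansion \eqref{eqqExinqH}, multiply it by the weight factor $h(\cos\f;-1/c,-cq)$, and record the identity
\begin{eqnarray}
\notag
\bg
h(\cos\f;-1/c,-cq)\,(qt^2;q^2)_\infty\,\E(\cos\f;t)
=\Sum\frac{q^{n^2/4}t^n}{(q;q)_n}\,h(\cos\f;-1/c,-cq)H_n(\cos\f|q).
\eg
\end{eqnarray}
Each summand is exactly a function on which \eqref{eqKhH} evaluates the action of $\mathcal{K}_{a,c}$, so it remains to apply $\mathcal{K}_{a,c}$ term by term and then resum the resulting series.

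The only step requiring care is interchanging the integral operator $\mathcal{K}_{a,c}$ of \eqref{eqdefTa} with this infinite sum. Here I would invoke the generating function \eqref{eqGFHn}: it yields, for each $r\in(0,1)$, a constant $C_r$ with $|H_n(\cos\f|q)|\le C_r(q;q)_n r^{-n}$ uniformly for $\f\in[0,\pi]$. Since $q^{n^2/4}$ decays faster than any geometric sequence, the series above converges absolutely and uniformly in $\f$; as the kernel appearing in \eqref{eqdefTa} is integrable against $w_H(\cos\f|q)\sin\f\,d\f$, the termwise application is legitimate. Applying \eqref{eqKhH} to the $n$-th term supplies the factor $q^{a(a-3)/4+na/2}\big(\tfrac{1-q}{2c}\big)^a h(\cos\t;-cq^{1-a/2},-q^{a/2}/c)$, and absorbing $q^{na/2}$ into $t^n$ replaces the argument $t$ of the exponential by $tq^{a/2}$:
\begin{eqnarray}
\notag
\bg
(qt^2;q^2)_\infty\,\big(\mathcal{K}_{a,c}h(\cos\t;-1/c,-cq)\E(\cos\t;t)\big)\\
=q^{a(a-3)/4}\Big(\frac{1-q}{2c}\Big)^a h(\cos\t;-cq^{1-a/2},-q^{a/2}/c)\Sum\frac{q^{n^2/4}(tq^{a/2})^n}{(q;q)_n}H_n(\cos\t|q).
\eg
\end{eqnarray}

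Finally I would read off the remaining series from \eqref{eqqExinqH} with $t$ replaced by $tq^{a/2}$: it equals $(q\,(tq^{a/2})^2;q^2)_\infty\,\E(\cos\t;tq^{a/2})=(q^{1+a}t^2;q^2)_\infty\,\E(\cos\t;tq^{a/2})$. Substituting this, using that $h$ is symmetric in its parameters so that $h(\cos\t;-cq^{1-a/2},-q^{a/2}/c)=h(\cos\t;-q^{a/2}/c,-cq^{1-a/2})$, and dividing through by $(qt^2;q^2)_\infty$ gives the asserted identity. Thus, granting Theorem~\ref{qhermite}, the argument is short; the main obstacle is precisely the interchange of the operator with the infinite \cqHp expansion of $\E$, and it is dispatched by the uniform bound on $H_n$ coming from \eqref{eqGFHn} together with the super-geometric decay of $q^{n^2/4}$.
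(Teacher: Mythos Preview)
Your argument is essentially the paper's: expand $\E$ via \eqref{eqqExinqH}, apply Theorem~\ref{qhermite} termwise, and resum with \eqref{eqqExinqH}; you additionally justify the interchange of $\mathcal{K}_{a,c}$ with the series, which the paper leaves implicit. One cosmetic point: the last step needs no division---once you substitute the closed form for the series you already have the displayed identity (and your base $q^2$ in $(qt^2;q^2)_\infty$ and $(q^{a+1}t^2;q^2)_\infty$ is what \eqref{eqqExinqH} actually gives; the base $q$ in the theorem statement is a typo).
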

\begin{proof}
	
We apply Theorem \ref{qhermite} and \eqref{eqqExinqH}, then
	\begin{eqnarray}
	\bg
	\notag
	\mathcal{K}_{a,c}h(\cos \t;-1/c,-cq)\E(\cos \t;t)\\=q^{a(a-3)/4}(\frac{1-q}{2c})^ah(\cos \t;-cq^{1-a/2},-q^{a/2}/c)\;\Sum\frac{q^{n^2/4}t^n}{(q;q)_n}H_n(x\,|\, q)q^{na/2},
	\eg
	\end{eqnarray}
	and  apply \eqref{eqqExinqH} again.
\end{proof}
We  can also prove the  important property 
\begin{eqnarray}\label{Taex}
\bg
\int_{-1}^1 \frac{\E(x;t)(\mathcal{K}_{a,c}f)(x) w_H(x|q)dx}{h(x;-cq^{1-a/2},-q^{a/2}/c)}\\=q^{a(a-3)/4}\left(\frac{1-q}{2c}\right)^a
\frac{(q^{a+1}t^2;q)_\infty}{(qt^2;q)_\infty}\int_{-1}^{1}\frac{\E(x;tq^{a/2})f(x) w_H(x|q)dx}{h(x;-cq,-1/c)}.	
\eg
\end{eqnarray}
\begin{proof}
	The result of multiplying  the  left-side of \eqref{Taex} by $(qt^2;q)_\infty$  is 
	\begin{eqnarray}
	\notag
	\bg
	q^{a(a-3)/4}\left(\frac{1-q}{2c}\right)^a \int_{-1}^{1}\int_{-1}^{1}
	\frac{\E(x;t)w_H(x|q)w_H(y|q)f(y)}{h(cos\f;-cq,-1/c)h(\cos \t;q^{a/2}e^{i\f},q^{a/2}e^{-i\f})}dy dx. 	
	\eg
	\end{eqnarray}
Interchange the order of integral, and apply  \eqref{eqqExinqH} and the Poisson Kernel of  \cqHp to see that the above quantity is
	\begin{eqnarray}
	\bg
	\notag
	q^{a(a-3)/4}(\frac{1-q}{2c})^a(qt^2;q)_\infty\int_{-1}^{1}\int_{-1}^{1}
	\(\Sum\frac{q^{n^2/4}t^n}{(q;q)_n}H_n(x\,|\, q)\)\(\Sum \frac{H_n(x|q)H_n(y|q)q^{an/2}}{(q;q)_n}\)\\ 
	\times \frac{w_H(x|q)w_H(y|q)f(y)}{h(cos\f;-cq,-1/c)} dxdy      \\
	=q^{a(a-3)/4}(\frac{1-q}{2c})^a\int_{-1}^{1} \Sum \frac{q^{n^2/4+an/2}t^n}{(q;q)_n} H_n(y|q) \frac{w_H(y|q)f(y)}{h(cos\f;-cq,-1/c)}dy\\
	=q^{a(a-3)/4}\left(\frac{1-q}{2c}\right)^a(q^{a+1}t^2;q)_\infty\int_{-1}^{1} \frac{\E(y;tq^{a/2})w_H(y|q)f(y)}{h(cos\f;-cq,-1/c)}dy.
	\eg
	\end{eqnarray}
	This  completes the proof.
\end{proof}

\section{Symmetric Bilinear  Kernels }
We recall the definition of the Askey--Wilson polynomials
\cite{Ismbook},  
\begin{eqnarray}
\bg
p_n(\cos \t; a, b ,c ,d) = (ab, ac, ad; q)_na^{-n} \qquad \\
\qquad \times 
{}_4\f_3\left(\left. \ba{c} 
q^{-n}, q^{n-1} abcd,  ae^{i\t}, ae^{-i\t}
\\
ab, \quad ac, \quad ad
\ea \right|q, q \right).
\eg
\end{eqnarray}
\begin{thm}
	The operator $\mathcal{K}_{a,c}$ maps an\aw polynomial 
	to a ${}_5\f_4$ function as follows 
	\begin{eqnarray}
	\label{eq5phi4}
	\bg
	\mathcal{K}_{a,c} p_n(.; -1/c, a_2, a_3, a_4) = (-1)^nq^{a(a-3)/4}c^{n}(\frac{1-q}{2c})^a 
	(-a_2/c, - a_3/c,-a_4/c;q)_n
	\\
	\times   
	\frac{( q^{a+1};q)_\infty}
	{(q;q)_\infty}\;  \frac{h(\cos \t; -q^{1-a/2}c)}{h(\cos \t; -cq^{1+a/2})}  \\
	\qquad \times 
	{}_5\f_4\left(\left. \ba{c} 
	q^{-n}, -q^{n-1} a_2a_3a_4/c, q,  -q^{a/2} e^{i\t}/c, -q^{a/2}e^{-i\t}/c
	\\
	-a_2/c, \quad - a_3/c, \quad -a_4/c,  \quad q^{a+1}
	\ea \right|q, q \right).
	\eg
	\end{eqnarray}
\end{thm}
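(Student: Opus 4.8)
The plan is to expand the Askey--Wilson polynomial $p_n(\cos\f;-1/c,a_2,a_3,a_4)$ in the ${}_4\f_3$ series and apply $\mathcal{K}_{a,c}$ term by term. Writing out the definition,
$$p_n(\cos\f;-1/c,a_2,a_3,a_4)=(-a_2/c,-a_3/c,-a_4/c;q)_n(-c)^n\sum_{k=0}^n\frac{(q^{-n},q^{n-1}a_2a_3a_4/(-c);q)_k}{(-a_2/c,-a_3/c,-a_4/c,q;q)_k}q^k\,(-e^{i\f}/c,-e^{-i\f}/c;q)_k\,(-1/c)^{-k}\cdot(\text{adjust powers}),$$
the key point is that the $\f$-dependent factor is exactly $(-e^{i\f}/c,-e^{-i\f}/c;q)_k=\phi_k(\cos\f;-1/c)$. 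So the whole problem reduces to knowing the action of $\mathcal{K}_{a,c}$ on $\phi_k(\cos\f;-1/c)$, and that is precisely Theorem 5.1 (the first theorem of Section 5). First I would substitute that formula, obtaining for each $k$ a term proportional to
$$q^{a(a-3)/4}\Bigl(\tfrac{1-q}{2c}\Bigr)^a\frac{(q^{a+k+1};q)_\infty}{(q^{k+1};q)_\infty}\,\phi_k(\cos\t;-q^{a/2}/c)\,\phi_a(\cos\t;-cq^{1-a/2}).$$

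Next I would pull the $k$-independent factors $q^{a(a-3)/4}(\tfrac{1-q}{2c})^a$ and $\phi_a(\cos\t;-cq^{1-a/2})$ outside the sum. The ratio $(q^{a+k+1};q)_\infty/(q^{k+1};q)_\infty$ I would rewrite as $(q^{a+1};q)_\infty/(q;q)_\infty$ times $(q;q)_k/(q^{a+1};q)_k$ — this is the standard manipulation $(\,q^{a+k+1};q)_\infty=(q^{a+1};q)_\infty/(q^{a+1};q)_k$ and likewise for the denominator. The factor $(q;q)_k$ in the numerator combines with the existing $1/(q;q)_k$ from the ${}_4\f_3$ to leave nothing there, while $1/(q^{a+1};q)_k$ becomes a new lower parameter; and the surviving $(q;q)_k$ coming out of $(q^{k+1};q)_\infty^{-1}$... — more carefully, $(q^{k+1};q)_\infty^{-1}=(q;q)_\infty^{-1}(q;q)_k$, so the net effect is an extra $(q;q)_k$ upstairs together with the new $(q^{a+1};q)_k$ downstairs. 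Since $(q;q)_k/(q;q)_k=1$ after cancelling the series' own $(q;q)_k$, what remains is that we have \emph{added} an upper parameter $q$ (with its $(q;q)_k$ built in as $(q;q)_k=(q;q)_k$, i.e. the pair ``upper $q$'' contributes $(q;q)_k$ and the series denominator already carries $(q;q)_k$, so it is consistent to display $q$ as an extra numerator parameter) and a lower parameter $q^{a+1}$. Together with the two new $\f\to\t$ factors $\phi_k(\cos\t;-q^{a/2}/c)=(-q^{a/2}e^{i\t}/c,-q^{a/2}e^{-i\t}/c;q)_k$ appearing as two more upper parameters, the ${}_4\f_3$ is promoted to a ${}_5\f_4$ with exactly the parameters displayed in \eqref{eq5phi4}.

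Finally I would reconcile the prefactors: the powers of $c$ and $(-1)$ from $(-c)^n$ and from converting $(-1/c)$-type factors, the Pochhammer $(-a_2/c,-a_3/c,-a_4/c;q)_n$ carried along unchanged, and the identification of $\phi_a(\cos\t;-cq^{1-a/2})$ with the ratio $h(\cos\t;-q^{1-a/2}c)/h(\cos\t;-cq^{1+a/2})$ via the convention $(A;q)_\b=(A;q)_\infty/(Aq^\b;q)_\infty$ applied to both factors $-cq^{1-a/2}e^{\pm i\t}$. The main obstacle is purely bookkeeping: keeping the powers of $q$ (the $q^{a(a-3)/4}$, the $q^k$ from the series, and the $q$-powers hidden in rewriting $a^{-n}$-type normalizations) and the signs consistent, so that the stated prefactor $(-1)^nq^{a(a-3)/4}c^n(\tfrac{1-q}{2c})^a(-a_2/c,-a_3/c,-a_4/c;q)_n(q^{a+1};q)_\infty/(q;q)_\infty$ emerges exactly. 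No genuinely hard analytic step is involved once Theorem 5.1 is in hand; the term-by-term interchange of $\mathcal{K}_{a,c}$ with the finite sum is trivially justified since $n$ is a nonnegative integer.
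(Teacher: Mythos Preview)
Your approach is correct and essentially identical to the paper's: both expand the Askey--Wilson polynomial as its defining ${}_4\phi_3$ sum and evaluate $\mathcal{K}_{a,c}$ term by term, the only cosmetic difference being that you invoke Theorem~5.1 for the action on $\phi_k(\cos\phi;-1/c)$, whereas the paper substitutes directly into the integral definition \eqref{eqdefTa} and recognises the resulting $k$-th integral as an Askey--Wilson integral \eqref{eqAWI} (which is exactly how Theorem~5.1 itself was proved). Your bookkeeping of the $(q;q)_k/(q^{a+1};q)_k$ factor and the identification $\phi_a(\cos\theta;-cq^{1-a/2})=h(\cos\theta;-cq^{1-a/2})/h(\cos\theta;-cq^{1+a/2})$ are both correct.
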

\begin{proof}
	It is obvious that
	\begin{eqnarray}
	\bg
	\notag
	\frac{(-1)^nq^{-a(a-3)/4}c^{-n}(\frac{2c}{1-q})^a}{(-a_2/c, - a_3/c,-a_4/c;q)_n(q^a;q)_\infty h(\cos\t; -cq^{1-a/2},-q^{a/2}/c)}	T_a p_n(.; -1/c, a_2, a_3, a_4)\\=	\Sum \frac{(q^{-n},-q^{n-1}a_2a_3a_4/c;q)_k}{(q,-a_2/c,-a_3/c,-a_4/c;q)_k}  \int_0^\pi \frac{w_H(\cos \f |q) sin\f d\f}{h(\cos\f;q^{a/2}e^{i\t},q^{a/2}e^{-i\t})h(\cos\f;-q^k/c,-cq)} 
	\eg
	\end{eqnarray}
	The above integral is an Askey-Wilson integral, so we use the evaluation \eqref{eqAWI} and establish \eqref{eq5phi4} after some simplifications.  
\end{proof}

The same method of proof establishes the following identity  \begin{eqnarray}
\bg
(\mathcal{K}_{a,c} p_n(.; -cq, a_2, a_3, a_4))(\cos \t) = (-1)^nq^{a(a-3)/4}(cq)^{-n}(\frac{1-q}{2c})^a
\\
\times   
(-a_2cq, - a_3cq,-a_4cq;q)_n \; \frac{( q^{a+1};q)_\infty}
{(q;q)_\infty}\;  \frac{h(\cos \t; -q^{1-a/2}c)}{h(\cos \t; -cq^{1+a/2})}  \\
\qquad \times 
{}_5\f_4\left(\left. \ba{c} 
q^{-n}, -q^{n} ca_2a_3a_4, q,  -cq^{a/2+1} e^{i\t}, -cq^{a/2+1}e^{-i\t}
\\
-a_2cq, \quad - a_3cq, \quad -a_4cq,  \quad q^{a+1}
\ea \right|q, q \right),
\eg
\end{eqnarray}
whose proof will be omitted.

The special case $a_2=-cq$ is of interest  because the ${}_5\f_4$ 
becomes a ${}_4\f_3$, namely
\begin{eqnarray}
\label{eq4phi3}
\bg
\mathcal{K}_{a,c} p_n(.; -1/c, -cq, a_3, a_4) = (-1)^nq^{a(a-3)/4}c^{n}(\frac{1-q}{2c})^a 
\\
\times  (q, - a_3/c,-a_4/c;q)_n \;  
\frac{( q^{a+1};q)_\infty}
{(q;q)_\infty}\;  \frac{h(\cos \t; -q^{1-a/2}c)}{h(\cos \t; -cq^{1+a/2})}  \\
\qquad \times 
{}_4\f_3\left(\left. \ba{c} 
q^{-n}, q^{n} a_3a_4,   -q^{a/2} e^{i\t}/c, -q^{a/2}e^{-i\t}/c
\\
- a_3/c, \quad -a_4/c,  \quad q^{a+1}
\ea \right|q, q \right).
\eg
\end{eqnarray}
The ${}_4\f_3$ is indeed an Askey-Wilson polynomial and we have proved the transmutation relation 
\begin{eqnarray}
\label{taasp}
\bg
(\mathcal{K}_{a,c} p_n(.; -1/c, -cq, a_3, a_4))(\cos \t)  =q^{a(a-3)/4+na/2}(\frac{1-q}{2c})^a 
\frac{(q;q)_n}{(q^{a+1};q)_n}
\\
\times   
\frac{( q^{a+1};q)_\infty}
{(q;q)_\infty}\;  \frac{h(\cos \t; -q^{1-a/2}c)}{h(\cos \t; -cq^{1+a/2})} p_n(x; -q^{a/2}/c, -cq^{1+a/2}, q^{-a/2}a_3,  q^{-a/2}a_4).  
\eg
\end{eqnarray}

Ismail has shown in \cite{Ism} that transmutation relations lead to  bilinear formulas  through the  Hilbert-Schmidt theory of integral equations.  
The orthogonality relation of the  Askey-Wilson polynomial is,  \cite{And:Ask:Roy}, \cite{Gas:Rah}, \cite{Ask:Wil}    
\begin{eqnarray}\label{eqawpo}
\bg
\int_{0}^{\pi}p_m(\cos\t;\textbf{t}|q)p_n(\cos\t;\textbf{t}|q)w(\cos\t;\textbf{t})d\t\\
=\frac{2\pi(t_1t_2t_3t_4q^{2n};q)_{\infty}(t_1t_2t_3t_4q^{n-1};q)_{n}}{(q^{n+1};q)_{\infty}\prod_{1\le j<k\le 4}(t_jt_kq^{n};q)_{\infty}}\delta_{m,n}
\eg
\end{eqnarray}
where ${\bf t}= (t_1, t_2, t_3, t_4)$, and the weight function is 
\begin{eqnarray}
w(\cos\t;t_1,t_2,t_3,t_4): =\frac{(e^{2i\t},e^{-2i\t};q)_{\infty}}{\prod_{j=1}^4(t_je^{i\t},t_je^{-i\t};q)_{\infty}}.
\end{eqnarray}
For convenience  we set  
\begin{eqnarray}
\notag
\bg
M_n(t_1,t_2,t_3,t_4)  =\frac{2\pi(t_1t_2t_3t_4q^{2n};q)_{\infty}(t_1t_2t_3t_4q^{n-1};q)_{n}}{(q^{n+1};q)_{\infty}\prod_{1\le j<k\le 4}(t_jt_kq^{n};q)_{\infty}}\\
A_n=M_n(-q^{a/2}/c,-q^{1+a/2}c,q^{-a/2}a_3,q^{-a/2}a_4)\\
B_n=M_n(-1/c,-cq,a_3,a_4),\qquad 
C_n=\frac{(q^{a+n+1};q)_\infty}{(q^{n+1};q)_\infty}q^{an/2}.
\eg
\end{eqnarray}
Using \eqref{taasp}  and the orthogonality relation \eqref{eqawpo} 
 we find that  
\begin{eqnarray}\label{int1}
\bg
\int_{0}^{\pi}w_0(\cos\t|q )\int_0^\pi  
\frac{ w_H(\cos \phi_1|q) p_n(\cos\f_1|-1/c,-cq,a_3,a_4)}
{h(\cos \f_1 ;-1/c,-cq)  h(\cos \f_1; q^{a/2}e^{i\t}, q^{a/2} e^{-i\t})} \sin \f_1 \\
\int_0^\pi  
\frac{ w_H(\cos \f_2|q) p_m(\cos\f_2|-1/c,-cq,a_3,a_4)}
{h(\cos \f_2 ;-1/c,-cq))_\infty  h(\cos \f_2; q^{a/2}e^{i\t}, q^{a/2} e^{-i\t})} \sin \f_2 d\f_2 d\f_1 d\t\\
=A_nC_n^2 \delta_{m,n},
\eg
\end{eqnarray}
where 
\begin{eqnarray}
\notag
w_0(\cos\t|q)=w(\cos\t;-q^{a/2}/c,-q^{1+a/2}c,q^{-a/2}a_3,q^{-a/2}a_4)h^2(\cos \t; -cq^{1+a/2},-q^{a/2}/c)\;.
\end{eqnarray}
In other words we have 
\begin{eqnarray}\label{int2}
\bg
\int_0^\pi \frac{w_H(\cos \f_1|q)p_n(\cos\f_1|-1/c,-cq,a_3,a_4)}{h(\cos \f_1 ;-1/c,-cq)}
\sin \f_1 \\
\int_0^\pi  
\frac{ w_H(\cos \f_2|q) p_m(\cos\f_2|-1/c,-cq,a_3,a_4)}
{h(\cos \f_2 ;-1/c,-cq) } \sin \f_2 \\
\int_0^\pi  \frac{w_0(\cos\t|q )}{h(\cos \f_1; q^{a/2}e^{i\t}, q^{a/2} e^{-i\t})h(\cos \f_2; q^{a/2}e^{i\t}, q^{a/2} e^{-i\t})} d\t d\f_1 d\f_2\\
=A_nC_n^2 \delta_{m,n}.
\eg
\end{eqnarray}
The completeness of the Askey-Wilson polynomials in the corresponding weighted $L_2$ spaces lead to the transmutation relation    
\begin{eqnarray} \label{kernel}
\bg
\frac{A_nC_n^2}{B_n} p_n(\cos\f_2|-1/c,-cq,a_3,a_4)\\
=\int_{0}^{\pi}K(\cos\f_1,\cos\f_2)
p_n(\cos\f_1|-1/c,-cq,a_3,a_4)d\f_1,
\eg
\end{eqnarray}
where the kernel  $K(\cos \f_1,\cos \f_2)$ is defined by 
\begin{eqnarray}
\bg
K(\cos \f_1,\cos \f_2)=\frac{W_H(\cos \f_1|q)}{h(\cos \f_1 ;-1/c,-cq)} \frac{W_H(\cos \f_2|q)}{h(\cos \f_2 ;-1/c,-cq)}\\
\times \frac{\sin \f_1 \sin \f_2}{w(\cos \f_2;-1/c,-cq,a_3,a_4)}\\
\int_0^\pi  \frac{w_0(\cos\t|q )}{h(\cos \f_1; q^{a/2}e^{i\t}, q^{a/2} e^{-i\t})h(\cos \f_2; q^{a/2}e^{i\t}, q^{a/2} e^{-i\t})} d\t.
\eg	
\end{eqnarray}
The connection  relation \eqref{kernel} establishes  the  bilinear formula
\begin{eqnarray}\label{kernel2}
\bg
\frac{K(\cos\f_1,\cos\f_2)}{w(\cos\f_1|-1/c,-cq,a_3,a_4)} \\
= \sum_{n=0}^{\infty} A_n\left(\frac{C_n}{B_n}\right)^2p_n(\cos\f_1|-1/c,-cq,a_3,a_4)  p_n(\cos\f_2|-1/c,-cq,a_3,a_4).
\eg
\end{eqnarray}

\section{A Three Parameter  Family of Operators}
We briefly outline a one parameter generalization of the operators 
$ \mathcal{K}_{a,c}$  of Section 6. The proofs are similar and will be omitted. 
We set 
\begin{eqnarray}\label{eqdefTab}
\bg
(T(a,b,r)  f)(\cos \t)= \qquad \qquad  \\
\qquad \qquad  h(\cos \t;a, b)(r^2;q)_\infty
\int_0^\pi \frac{w_H(\cos \f |q) f(\cos\f) \sin\f d\f}{h(\cos\f; re^{i\t},r e^{-i\t})h(\cos\f;a,b)}.
\eg
\end{eqnarray}
In the rest of this section all our operators are defined on $C[-1,1]$. 
It is clear that $T(a,b,r)$ are positive linear operators. 

\begin{thm}
	The operators $\{T(a,b,r);-1<r<1\}$ have the properties as follow. 
	
\noindent  $\textup{(a)}$ They   
form a multiplicative semigroup, that is,  $T(a,b,r)T(a,b,s)=T(a,b,rs)$.\\	
\noindent   $\textup{(b)}$ $T(a,b,r)$ tends to the 
identity operator $I$	as $r \to 1^-$. \\
\noindent $\textup{(c)}$  The eigenvalue functions of $T(a,b,r)$   
are $h(\cos \t;a,b)H_n(x \mid q)$ with eigenvalues $\lambda_n=r^n$.  
\noindent $\textup{(d)}$ We have operators $\mathcal{ B}_q(a,b)$ such that $\mathcal{ B}_q (a,b)T(a,b,r)=T(a,b,rq^{-1/2})$,  where\\
 $$\mathcal{ B}_q(a,b)f= \frac{h(\cos \t;a,b)[(1-q^{-1/2}az)(1-q^{-1/2}bz)\breve{f}(q^{1/2}z) - z^2(1-q^{-1/2}a/z)(1-q^{-1/2}b/z)\breve{f}(q^{-1/2}z)]}{h(\cos \t;q^{-1/2}a,q^{-1/2}b)(q^{3/4}-q^{-1/4})(z^2-1)/2}$$ with  $\breve{f}(z)=f(\cos \t)$ and $z=e^{i\t}.$\\
 In fact, $\mathcal{ B}_q(a,b)$ also has the representation $$\mathcal{ B}_q(a,b)f=\frac{1}{g(x;a,b)}\D (g(.;a,b)f),$$
 where\\
  $$ g(\cos \t;a,b)=\frac{(-q^{1/4}e^{i\t},-q^{1/4}e^{-i\t};q^{1/2})_\infty}{h(\cos \t;a,b)} \quad \text{ and  $\D$ is Askey-Wislon operator.}$$
\end{thm}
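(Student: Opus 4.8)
The plan is to follow the template already established in Sections 3–5 for the operators $\mathcal{K}_{a,c}$, since the paper states the proofs are similar. First I would treat parts (a), (b) and (c) together, because they all reduce to the same computation: the behaviour of $T(a,b,r)$ on the basis $\{h(\cos\t;a,b)H_n(x\mid q)\}$. Plugging $f(\cos\f)=h(\cos\f;a,b)H_n(\cos\f\mid q)$ into \eqref{eqdefTab}, the factor $h(\cos\f;a,b)$ in the numerator cancels the one in the denominator, leaving
\begin{eqnarray}
\notag
\bg
(T(a,b,r) h(\cdot;a,b)H_n)(\cos\t)\\
= h(\cos \t;a,b)(r^2;q)_\infty \int_0^\pi \frac{w_H(\cos\f\mid q) H_n(\cos\f\mid q)\sin\f\,d\f}{h(\cos\f; re^{i\t},re^{-i\t})}.
\eg
\end{eqnarray}
Expanding $1/h(\cos\f;re^{i\t},re^{-i\t}) = (r^2;q)_\infty^{-1}\sum_m H_m(\cos\t\mid q)H_m(\cos\f\mid q)r^m/(q;q)_m$ via the Poisson kernel \eqref{eqPoiKer}, and then using the orthogonality \eqref{eqorcqH}, only the $m=n$ term survives and the integral evaluates to $r^n h(\cos\t;a,b)H_n(\cos\t\mid q)$. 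This proves (c) directly. For the semigroup law (a), I would either iterate this eigenfunction computation — $T(a,b,r)T(a,b,s)$ acts on the basis as multiplication by $r^n s^n=(rs)^n$, and the basis is complete in the relevant $L_2$ space — or argue directly at the kernel level, inserting the Poisson-kernel reproducing identity exactly as in the proof of Theorem \ref{thm1}(a). Part (b) follows from Theorem \ref{Lem1} (Korovin's theorem): the Poisson kernel is nonnegative, $T(a,b,r)$ is the operator with that kernel against the measure $w_H$, and $f\mapsto f/h(\cos\f;a,b)$ preserves continuity, so $T(a,b,r)f\to f$ uniformly as $r\to 1^-$; the positivity assertion stated before the theorem is immediate from the integrand being nonnegative.

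For part (d) the strategy is to verify the two displayed formulas for $\mathcal{B}_q(a,b)$ and then check the intertwining relation. I would first establish the identity
$$\mathcal{B}_q(a,b)f = \frac{1}{g(x;a,b)}\,\D\!\left(g(\cdot;a,b)f\right),$$
by simply writing out the right-hand side from the definition \eqref{eqDefAWOP} of $\D$: with $g(\cos\t;a,b)=(-q^{1/4}e^{i\t},-q^{1/4}e^{-i\t};q^{1/2})_\infty / h(\cos\t;a,b)$, one has $\breve g(q^{\pm1/2}z)$ expressed through shifted $q$-shifted factorials, and the ratio $g(x;a,b)/\breve g(q^{\pm1/2}z)$ produces exactly the polynomial prefactors $(1-q^{-1/2}az)(1-q^{-1/2}bz)$ and $z^2(1-q^{-1/2}a/z)(1-q^{-1/2}b/z)$ appearing in the first display, after the $(-q^{1/4}\cdots;q^{1/2})_\infty$ factors telescope. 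This is the routine but slightly delicate bookkeeping step. Once the operator-composition form of $\mathcal{B}_q(a,b)$ is in hand, the relation $\mathcal{B}_q(a,b)T(a,b,r)=T(a,b,rq^{-1/2})$ can be checked on the eigenbasis: by (c), $T(a,b,r)$ sends $h(\cdot;a,b)H_n$ to $r^n h(\cdot;a,b)H_n$, so I need $\mathcal{B}_q(a,b)\big(h(\cdot;a,b)H_n\big) = q^{-n/2} h(\cdot;a,b)H_n$. Using the composition form, $\mathcal{B}_q(a,b)\big(h(\cdot;a,b)H_n\big) = g^{-1}\D\big(g\cdot h(\cdot;a,b)H_n\big)$, and $g(\cos\t;a,b)h(\cos\t;a,b) = (-q^{1/4}e^{i\t},-q^{1/4}e^{-i\t};q^{1/2})_\infty$ is independent of $a,b$; so this reduces to knowing how $\D$ acts on $(-q^{1/4}e^{i\t},-q^{1/4}e^{-i\t};q^{1/2})_\infty H_n(\cos\t\mid q)$ — essentially the action of $\D$ on the $\phi$-type basis times $H_n$, which combined with the three-term recurrence \eqref{eqqHrr} for $H_n$ gives the scalar $q^{-n/2}$ (up to the normalization built into the $(q^{3/4}-q^{-1/4})$ denominator in $\mathcal{B}_q$). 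Comparing with $T(a,b,rq^{-1/2})$ acting on the same eigenfunction, which gives $(rq^{-1/2})^n = q^{-n/2} r^n$, the two sides match, and completeness of the basis finishes (d).

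The main obstacle I anticipate is the algebraic verification that the two expressions for $\mathcal{B}_q(a,b)$ coincide — tracking the shifted $q$-shifted factorials $h(\cos\t;q^{-1/2}a,q^{-1/2}b)$ versus $h(\cos\t;a,b)$ and the $(-q^{1/4}e^{\pm i\t};q^{1/2})_\infty$ factors under the $z\mapsto q^{\pm1/2}z$ substitutions, and confirming the constant $(q^{3/4}-q^{-1/4})$ is the right normalization so that the eigenvalue comes out exactly $q^{-n/2}$. Everything else is a transcription of the $\mathcal{K}_{a,c}$ arguments: the eigenfunction computation is Theorem \ref{qhermite} with the generating-function factor $h(\cos\f;-1/c,-cq)$ replaced by the symmetric $h(\cos\f;a,b)$ and the scaling $q^{a/2}\to r$, and the semigroup and limit statements are the $T(a,b,r)$-analogues of Theorem \ref{thm1}(a),(b). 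One should also note that restricting to $C[-1,1]$ (as the paper does) makes the interchange of integration orders in (a) and the application of Korovin's theorem in (b) unproblematic.
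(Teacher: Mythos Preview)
The paper gives no proof for this theorem, stating only that ``the proofs are similar and will be omitted''; your proposal correctly reconstructs exactly the parallel the authors intend, with (a)--(c) following the templates of Theorems~\ref{thm1} and~\ref{qhermite} via the Poisson kernel and Korovin's theorem, and (d) reduced to the eigenbasis. Your caution about the normalization constant $(q^{3/4}-q^{-1/4})$ and the telescoping of the $q$-shifted factorials in the two forms of $\mathcal{B}_q(a,b)$ is well placed --- that is indeed the only nontrivial bookkeeping, and the paper supplies no details there either.
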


The operators $T(a,b,r)$ are  compact because $T(a,b,r)$ is 
the limit of finite rank operators. Moreover $T(a,b,r)$ is not 
invertible for $-1<r<1$ because $\lambda=0$ belongs to the 
spectrum.

 We next apply the operators $T(a,b,r)$ to derive transmutation 
 relation and bilinear formulas.  
 
\begin{thm}\label{tabrakpo}
The operator $T(t_1,t_2,r)$ maps  an Askey--Wilson 
polynomial  to a similar one: 
\begin{eqnarray} \label{eqGAWTrans}
T(t_1,t_2,r)p_n(.;t_1,t_2,t_3,t_4)=\frac{h(x;t_1,t_2)(r^2t_1t_2q^n;q)_\infty}{h(x;t_1r,t_2r)(t_1t_2q^n;q)_\infty}p_n(x;t_1r,t_2r,t_3/r,t_4/r)r^n
\end{eqnarray}
\end{thm}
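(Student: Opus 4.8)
The plan is to mirror the computation that established the transmutation relation \eqref{taasp} for the operators $\mathcal{K}_{a,c}$, but now using the cleaner definition \eqref{eqdefTab} and the eigenfunction property Theorem 7.1(c). First I would write out the left-hand side explicitly: expand $p_n(\cos\f;t_1,t_2,t_3,t_4)$ as its defining ${}_4\f_3$ series in the variable $\f$, substitute into the integral \eqref{eqdefTab}, and interchange summation and integration. Each term of the series is of the form $(t_1e^{i\f},t_1e^{-i\f};q)_k$ times a constant, so after multiplying by the factor $1/h(\cos\f;re^{i\t},re^{-i\t})h(\cos\f;t_1,t_2)$ the $\f$-integral becomes a standard Askey--Wilson integral \eqref{eqAWI} with the four parameters $t_1q^k$, $t_2$, $re^{i\t}$, $re^{-i\t}$. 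Evaluating it gives a ratio of $q$-shifted factorials that I then reassemble into a ${}_4\f_3$ in $\t$.

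The second step is the bookkeeping: I would collect the prefactors $h(\cos\t;t_1,t_2)$, the $(r^2;q)_\infty$, and the pieces coming out of \eqref{eqAWI}, and check that they combine into the claimed prefactor $h(x;t_1,t_2)(r^2t_1t_2q^n;q)_\infty / \big(h(x;t_1r,t_2r)(t_1t_2q^n;q)_\infty\big)$ together with the power $r^n$. Simultaneously I would verify that the resulting ${}_4\f_3$ has numerator parameters $q^{-n}$, $q^{n-1}t_1t_2t_3t_4$, $t_1re^{i\t}$, $t_1re^{-i\t}$ and denominator parameters $t_1t_2r^2$, $t_1r\cdot t_3/r$, $t_1r\cdot t_4/r$, which is precisely $(t_1r)^{-n}(t_1t_2r^2,t_1t_3,t_1t_4;q)_n^{-1}$ times $p_n(x;t_1r,t_2r,t_3/r,t_4/r)$ after accounting for the normalization constant in the definition of $p_n$. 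A cleaner alternative, which I would probably run in parallel as a check, is to use Theorem 7.1(c) directly: expand $h(\cos\f;t_1,t_2)^{-1}p_n(\cos\f;t_1,t_2,t_3,t_4)$ — no, rather expand $p_n$ in the basis $\{h(\cos\f;t_1,t_2)H_m(\cos\f|q)\}$ is awkward because that basis is not orthogonal; so the series-plus-Askey--Wilson-integral route is the honest one.

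The main obstacle I expect is the parameter arithmetic inside the ${}_4\f_3$: showing that the argument and the four $q^k$-shifts produced by \eqref{eqAWI} reorganize exactly into the Askey--Wilson polynomial with shifted parameters $t_1r,t_2r,t_3/r,t_4/r$, including getting the balancing condition and the leading power of $r$ right. This is the same simplification that was suppressed with the phrase ``after some simplifications'' in the proof of \eqref{eq5phi4}; here it is slightly easier because two of the four Askey--Wilson-integral parameters are $re^{\pm i\t}$ rather than $q^{a/2}e^{\pm i\t}$, so no floor/fractional-part juggling intervenes. A secondary point to be careful about is the region of validity: since we are told all operators in this section act on $C[-1,1]$ and $-1<r<1$, the interchange of sum and integral is justified by uniform convergence of the ${}_4\f_3$ (a terminating series, so this is automatic) and the absolute convergence of the Askey--Wilson integral. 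Once the $n=0$ case and the general term are checked, the identity \eqref{eqGAWTrans} follows, and one notes in passing that letting $t_3,t_4\to$ suitable values or specializing recovers \eqref{taasp} with $t_1=-1/c$, $t_2=-cq$, $r=q^{a/2}$, up to the normalization of $\mathcal{K}_{a,c}$ versus $T$.
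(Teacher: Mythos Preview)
Your approach is correct and is precisely what the paper intends: it states only that ``the proof uses the Askey--Wilson integral \eqref{eqAWI} and will be omitted,'' and your plan of expanding $p_n$ as a terminating ${}_4\f_3$, applying \eqref{eqAWI} termwise with parameters $t_1q^k,t_2,re^{i\t},re^{-i\t}$, and reassembling into $p_n(x;t_1r,t_2r,t_3/r,t_4/r)$ carries this out verbatim. The bookkeeping you anticipate works cleanly because $(t_1r)(t_2r)(t_3/r)(t_4/r)=t_1t_2t_3t_4$ leaves the top balanced parameter unchanged while the three lower parameters become $r^2t_1t_2,\,t_1t_3,\,t_1t_4$; your remark about the specialization $t_1=-1/c$, $t_2=-cq$, $r=q^{a/2}$ recovering \eqref{taasp} is also correct.
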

The proof uses the Askey--Wilson integral \eqref{eqAWI}  
and will be omitted. 

As in Section 6, the transmutation relation \eqref{eqGAWTrans}
 will lead a bilinear formula via the technique  developed  in 
 \cite{Ism}. The proof uses the orthogonality of the Askey--Wilson 
 polynomials \eqref{eqawpo}

As in Section 6, we define three sequences of $\{a_n\}$ $\{b_n\}$ and $\{c_n\}$,
\begin{eqnarray}
\bg
\notag
M_n(t_1,t_2,t_3,t_4)  =\frac{2\pi(t_1t_2t_3t_4q^{2n};q)_{\infty}(t_1t_2t_3t_4q^{n-1};q)_{n}}{(q^{n+1};q)_{\infty}\prod_{1\le j<k\le 4}(t_jt_kq^{n};q)_{\infty}}\\
a_n=M_n(t_1r,t_2r,t_3/r,t_4/r)  \quad \quad b_n= M_n(t_1,t_2,t_3,t_4) \quad \quad 
c_n=\frac{(r^2t_1t_2q^n;q)_\infty}{(t_1t_2q^n;q)_\infty}r^n.
\eg
\end{eqnarray}

Using the method of proof in Section 6 we prove that: 
\begin{eqnarray}
\bg
\int_0^\pi W_0(\cos \t \mid t_1r, t_2r, t_3/r, t_4/r)
\int_0^\pi \frac{(r^2;q)_\infty w_H(\cos \f_1 \mid q)p_n(\cos \f_1 \mid t_1, t_2,t_3 ,t_4)\sin \f_1}{h(\cos \f_1;re^{i\t},re^{-i\t})h(\cos \f_1;t_1,t_2)}\\
\int_0^\pi \frac{(r^2;q)_\infty w_H(\cos \f_2 \mid q)p_m(\cos \f_2 \mid t_1, t_2,t_3 ,t_4)\sin \f_1}{h(\cos \f_2;re^{i\t},re^{-i\t})h(\cos \f_2;t_1,t_2)}d\f_1 d\f_2 d\t=a_nc_n^2 \delta_{m, n},
\eg
\end{eqnarray}
where 
$$W_0(\cos \t \mid t_1r, t_2r, t_3/r, t_4/r)=w(\cot \t;t_1r,t_2r,t_3/r,t_4/r)h^2(\cos\t;t_1r,t_2r).$$ Moreover we conclude that 
 
\begin{eqnarray}
\frac{a_nc_n^2}{b_n}p_n(\cos \f_2;t_1, t_2, t_3, t_4)=\int_0^\pi k_0(\cos \f_1,\cos\f_2)p_n(\cos\f_1;t_1,t_2,t_3,t_4)d\f_1,
\end{eqnarray}
where
\begin{eqnarray}
\bg
\notag
k_0(\cos \f_1,\cos\f_2)= \frac{w_H(\cos \f_1 \mid q)(r^2;q)_\infty}{h(\cos\f_1;t_1,t_2)}\frac{w_H(\cos \f_2 \mid q)(r^2;q)_\infty}{h(\cos\f_2;t_1,t_2)}\\
\times \frac{\sin \f_1 \sin\f_2}{w(\cos \f_2;t_1,t_2,t_3,t_4)}\\
\times \int_{0}^{\pi} \frac{W_0(\cos \t \mid t_1r, t_2r, t_3/r, t_4/r)}{h(\cos\f_1;re^{i\t},re^{-i\t})h(\cos\f_2;re^{i\t},re^{-i\t})}d\t,
\eg
\end{eqnarray}
which leads to the   bilinear formula  
\begin{eqnarray}
\label{k0w}
\frac{k_0(\cos \f_1,\cos\f_2)}{w(\cos \f_1|t_1,t_2,t_3,t_4)}=\sum_{n=0}^\infty a_n\left(\frac{c_n}{b_n}\right)^2 p_n(\cos \f_1| t_1,t_2,t_3,t_4)p_n(\cos \f_2| t_1,t_2,t_3,t_4).
\end{eqnarray}

It must be  noted that the special case $b=q^{1/2}a$ of our operators  is very interesting because some of the expressions simplify a great deal. In this case the operators $B_q$ becomes 
\bea
\bg
\mathcal{ B}_q(a,q^{1/2}a)f=\frac{1}{(q^{3/4}-q^{-1/4})(z^2-1)/2} \\
\times  \left[\left(\frac{1-az}{1-q^{-1/2}/z}\right)\breve{f}(q^{1/2}z)-z^2\left(\frac{1-a/z}{1-aq^{-1/2}z}\right)\breve{f}(q^{-1/2}z)\right],
\eg
\eea
 while the $T$ operator becomes 
\begin{eqnarray} 
\bg
(T(a,q^{1/2}a,r)  f)(\cos \t)= \qquad \qquad  \\
\qquad \qquad  (ae^{i\t},ae^{-i\t};q^{1/2})_\infty(r^2;q)_\infty
\int_0^\pi \frac{w_H(\cos \f |q) f(\cos\f) \sin\f d\f}{h(\cos\f; re^{i\t},r e^{-i\t})(ae^{i\f},ae^{-i\f};q^{1/2})_\infty}.
\eg
\end{eqnarray}

\noindent M. E. H. I, 
  Department of Mathematics\\
University of Central Florida, Orlando, Florida 32816\\
email: ismail@math.ucf.edu
 
 \bigskip 
 \noindent K. Zhou,
School of Mathematics and Statistics\\ Central South University, Changsha, Hunan 410083, P.R. China\\
and Department of Mathematics,\\ University of Wisconsin, Madison, WI 53706-1388, USA\\
email: kzhou64@wisc.edu

 \end{document}